\newtheorem{theorem}{Theorem}[section]
\newtheorem{lemma}[theorem]{Lemma}
\newtheorem{corollary}[theorem]{Corollary}
\newtheorem{prop}[theorem]{Proposition}
\newtheorem{defn}[theorem]{Definition}
\newcommand{\Z}{\mathbb{Z}}
\newcommand{\R}{\mathbb{R}}
\newcommand{\T}{\mathbb{T}}
\newcommand{\e}{\epsilon}
\newcommand{\eps}{\varepsilon}
\newcommand{\lesim}{\lesssim}
\newcommand{\gesim}{\gtrsim}
\newcommand{\calC}{\mathcal{C}}
\newcommand{\calD}{\mathcal{D}}
\newcommand{\cD}{\mathcal{D}}
\newcommand{\calK}{\mathcal{K}}
\newcommand{\calL}{\mathcal{L}}
\newcommand{\calQ}{\mathcal{Q}}
\newcommand{\cQ}{\mathcal{Q}}
\newcommand{\calP}{\mathcal{P}}
\newcommand{\cP}{\mathcal{P}}
\newcommand{\calR}{\mathcal{R}}
\newcommand{\cR}{\mathcal{R}}
\newcommand{\calS}{\mathcal{S}}
\newcommand{\tA}{\tilde{A}}
\newcommand{\tB}{\tilde{B}}
\newcommand{\tC}{\tilde{C}}
\newcommand{\tX}{\tilde{X}}
\newcommand{\tY}{\tilde{Y}}
\newcommand{\intersect}{\sim}
\title[Incidence estimates for $\alpha$-dimensional tubes and $\beta$-dimensional balls]{Incidence estimates for $\alpha$-dimensional tubes and $\beta$-dimensional balls in $\R^2$}
\author{Yuqiu Fu}
\address{Department of Mathematics, MIT,
Cambridge, MA 02139}
\email{yuqiufu@mit.edu}
\author{Kevin Ren}
\address{Department of Mathematics, MIT,
Cambridge, MA 02139}
\email{kevinren@mit.edu}
\date{\today}
\begin{document}

\begin{abstract}
    We prove essentially sharp incidence estimates for a collection of $\delta$-tubes and $\delta$-balls in the plane, where the $\delta$-tubes satisfy an $\alpha$-dimensional spacing condition and the $\delta$-balls satisfy a $\beta$-dimensional spacing condition. Our approach combines a combinatorial argument for small $\alpha, \beta$ and a Fourier analytic argument for large $\alpha, \beta$. As an application, we prove a new lower bound for the size of a $(u,v)$-Furstenberg set when $v \ge 1, u + \frac{v}{2} \ge 1$, which is sharp when $u + v \ge 2$. We also show a new lower bound for the discretized sum-product problem.
\end{abstract}

\maketitle

\section{Introduction}
Let $0 < \delta \leq 1$ be a small parameter. We will work with $\delta$-tubes and $\delta$-balls in the plane $\R^2$. A {$\delta$-ball} is a ball of radius $\delta$. A {$\delta$-tube} is a $\delta \times 1$ rectangle. The {direction} of a rectangle is the vector pointing in the direction of its longest side. (This vector is only determined up to $\pm 1$.) 


\begin{defn}
    Let $P$ be a set of $\delta$-balls and $\T$ be a set of $\delta$-tubes. The number of incidences $I(P, \T)$ is the number of pairs $(p, t)$ of $\delta$-balls $p \in P$ and $\delta$-tubes $t \in \T$ such that $p$ intersects $t$: $p \cap t \neq \emptyset$.
\end{defn}
    
The basic problem we will consider is the following: Given a set of $\delta$-balls $P$ and a set of $\delta$-tubes $\T$ contained in the square $[0, 1]^2$, what is the maximum number of incidences $I(P, \T)$?


We will impose a spacing condition on the set of $\delta$-balls and the set of $\delta$-tubes. The spacing condition is standard, see e.g. \cite{hera-improved}.

\begin{defn}\label{def:ball_dim}
    For $0 \le \beta \le 2$ and $K \ge 1$, we call a set of $\delta$-balls $P$ contained in $[0, 1]^2$ a $(\delta, \beta, K)$-set of balls if for every $w \in [\delta, 1]$ and every ball $B_w$ of radius $w$,
    \begin{equation*}
        \# \{ p \in P \mid p \subset B_w \} \le K \cdot \left( \frac{w}{\delta} \right)^\beta.
    \end{equation*}
\end{defn}
In the definition of $(\delta, \beta, K)$-set, $K$ may depend on $\delta$. If $K$ is constant, then we drop $K$ from the notation. By taking $w = 3\delta$, any $\delta$-ball in a $(\delta, \beta, K)$-set of balls may intersect up to $\le 9K$ many other $\delta$-balls in the set.

We will impose an analogous condition on the set of $\delta$-tubes.
\begin{defn}\label{def:tubes_dim}
    For $0 \le \alpha \le 2$ and $K \ge 1$, we call a set of $\delta$-tubes $\T$ contained in $[0, 1]^2$ a $(\delta, \alpha, K)$-set of tubes if for every $w \in [\delta, 1]$ and every $w \times 2$ tube $T_w$,
    \begin{equation*}
        \# \{ t \in \T \mid t \subset T_w \} \le K \cdot \left( \frac{w}{\delta} \right)^\alpha.
    \end{equation*}
\end{defn}

\textit{Remark.} (1) For applications, one might take $K = C_\eps \delta^{-\eps}$ for some $\eps > 0$.

(2) Another common definition for $(\delta, \beta, C)$-sets of balls has the condition $\# \{ p \in P \mid p \subset B_w \} \le C \cdot |P| \cdot w^\beta$. This is a special case of Definition \ref{def:ball_dim} with $K = |P| \delta^\beta C$.

We can rephrase the problem as follows: given a $(\delta, \beta, K_\beta)$-set of balls $P$ and a $(\delta, \alpha, K_\alpha)$-set of tubes $\T$, what is the maximum number of incidences $I(P, \T)$?

In \cite{guth}, incidence problems for $\delta$-tubes with some spacing conditions were considered. They fix a parameter $1 \le W \le \delta^{-1}$ and choose $\T$ to be a collection of $W^2$ well-spaced $\delta$-tubes: each $W^{-1} \times 1$ rectangle in $\R^2$ contains at most one $\delta$-tube in $\T$. They also consider another spacing condition, where each $W^{-1} \times 1$ rectangle contains $\sim N_1$ many $\delta$-tubes in each direction, for a fixed $N_1$. Using a Fourier analytic approach, \cite{guth} proved sharp incidence estimates for well-spaced $\delta$-tubes. This Fourier analytic method is also used in \cite{gwz20,dgw20,gmw20,FGMdirichlet, FGMsharplevel} to derive incidence estimates, decoupling estimates, and square function estimates. 

Regarding our question, we will prove the following main theorem:
\begin{theorem}\label{main}
    Suppose $\alpha, \beta$ satisfy $0 \le \alpha, \beta \le 2$, and let $K_\alpha, K_\beta \ge 1$. For every $\eps > 0$, there exists $C = C_\eps K_\alpha K_\beta$ with the following property: for every $(\delta, \beta, K_\beta)$-set of balls $P$ and $(\delta, \alpha, K_\alpha)$-set of tubes $\T$ contained in $[0, 1]^2$, the following bound holds:
    \begin{equation*}
        I(P, \T) \le C \cdot \delta^{-f(\alpha, \beta) - \eps},
    \end{equation*}
    where $f(\alpha, \beta)$ is defined as in Figure \ref{fig:answers}. These bounds are sharp up to $C \cdot \delta^{-\eps}$.
\end{theorem}
\begin{figure}[h]
    \includegraphics[scale=0.7]{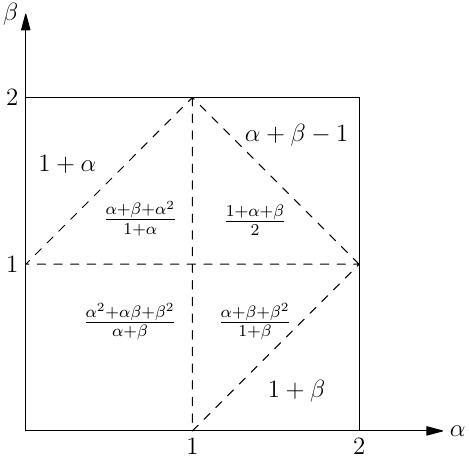}
    \caption{Values of $f(\alpha,\beta)$}
    \label{fig:answers}
\end{figure}

For $\alpha, \beta > 1$, we have the following refined result:
\begin{theorem}\label{mainbigbig}
Fix $\eps > 0$, and let $c^{-1} = \max(\alpha + \beta - 1, 2)$. There exists $C_\eps > 0$ such that the following holds: for any $(\delta, \beta, K_\beta)$-set of balls $P$ and $(\delta, \alpha, K_\alpha)$-set of tubes $\T$ contained in $[0, 1]^2$, we have the following incidence bound:
\begin{equation*}
    I(P, \T) \le C_\eps \delta^{-c-\eps} (K_\alpha K_\beta)^c |P|^{1-c} |\T|^{1-c}.
\end{equation*}
\end{theorem}
As an application of Theorem \ref{mainbigbig}, we prove a lower bound for the minimal Hausdorff dimension of a $(u, v)$-Furstenberg set. There has been much study of $(u, v)$-Furstenberg sets; what follows is an abbreviated exposition borrowing from \cite{furstenberg}. A set $A \subset \R^2$ is called a $(u, v)$-Furstenberg set if there exists a family of lines $\calL$ with $\dim_H \calL = v$ and $\dim_H (A \cap \ell) \ge u$ for all $\ell \in \calL$, where $\dim_H K$ denotes the Hausdorff dimension of $K$. The $(u, v)$-Furstenberg set problem asks for bounds on $\gamma(u, v) := \inf \{ \dim_H (A) : A \text{ is an } (u, v)\text{-Furstenberg set} \}$. The case $v = 1$ has attracted considerable interest. While it is conjectured that $\gamma(u, 1) = \frac{1 + 3u}{2}$ for $u>0$, the best known bounds are from Wolff \cite{wolff}, Bourgain \cite{bourgain2003erdHos}, and Orponen and Shmerkin \cite{orp-shm}, whose work shows $\gamma(u, 1) \ge \max(u + \frac{1}{2}, 2u + \eps(u))$ for some small constant $\eps(u) > 0$ when $u<1$ and $\eps(u) = 0$ when $u = 1.$

For more general $v \in [0, 2]$, work of Molter and Rela \cite{molt}, H\'era \cite{hera}, H\'era, M\'athe, and K\'eleti \cite{hera-mathe-keleti}, Lutz and Stull \cite{lutz}, H\'era, Shmerkin, and Yavicoli \cite{hera-improved}, Orponen and Shmerkin \cite{orp-shm}, and Shmerkin and Wang \cite{shmerkin2022dimensions} show that
\begin{equation}\label{previousF}
    \gamma(u, v) \ge \begin{cases}
        u + v, & \text{ if } v \le u, u\in (0,1] \\
        2u + \eps(u, v), & \text{ if } u < v \le 2u , u\in (0,1], \\
        u + \frac{v}{2} + \eps(u, v), & \text{ if } 2u < v \leq 2, u\in (0,1]. 
    \end{cases}
\end{equation}
Recently, Dabrowski, Orponen, and Villa in \cite{furstenberg} showed for $u > \frac{1}{2}, v > 1$ that $\gamma(u, v) \ge 2u + (v-1)(1-u)$. Our result improves on \eqref{previousF} and \cite{furstenberg} for $(u, v)$ pairs satisfying $v > 1 + \eps(u, v)$ and $u + \frac{v}{2} \ge 1 + \eps(u, v)$. 

\begin{theorem}\label{thm:furstenberg}
    For $1 \le v \le 2$ and $0 < u \le 1$, a $(u, v)$-Furstenberg set $A$ has $\dim_H A \ge \min(2u + v - 1, u + 1)$. This result is sharp when $u + v \ge 2$.
\end{theorem}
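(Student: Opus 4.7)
We obtain the lower bound by a $\delta$-discretization that reduces the problem to an application of Theorem~\ref{mainbigbig}, and realize sharpness when $u+v\ge 2$ via a standard product-type construction.

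Fix $\eps > 0$ small and set $s = \dim_H A$. Using the standard Furstenberg $\delta$-discretization framework (Frostman's lemma applied to $A$, to $\calL$, and to each slice $A \cap \ell$, combined with multiscale pigeonholing over popular mass levels), one can find arbitrarily small $\delta > 0$ together with: (i) a $(\delta, v, \delta^{-\eps})$-set $\T$ of $\delta$-tubes with $|\T| \ge \delta^{-v+\eps}$, whose central lines lie in $\calL$; (ii) a $(\delta, s, \delta^{-\eps})$-set $P$ of $\delta$-balls with $|P| \le \delta^{-s-\eps}$, whose union covers a positive-measure portion of $A$; and (iii) for every $t \in \T$, at least $\delta^{-u+\eps}$ balls of $P$ meet $t$ (coming from the discretization of $A \cap \ell_t$). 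The main technical obstacle is arranging (ii) and (iii) \emph{simultaneously}, i.e., producing $P$ that is a $(\delta, s)$-set \emph{and} is heavy on every selected tube, losing only $\delta^{-O(\eps)}$ factors; this is standard Orponen--Shmerkin-type multiscale pigeonholing for Furstenberg sets.

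Summing (iii) over $t \in \T$ gives the lower bound $I(P, \T) \gtrsim \delta^{-u-v-O(\eps)}$. Applying Theorem~\ref{mainbigbig} with $\alpha = v$, $\beta = s$, and $K_\alpha = K_\beta = \delta^{-\eps}$, so that $c^{-1} = \max(s+v-1, 2)$, the identity $1 + (s+v)(s+v-2) = (s+v-1)^2$ simplifies the exponent to
\[
    c + (s+v)(1-c) \;=\; \max\!\left( s+v-1,\; \tfrac{s+v+1}{2}\right),
\]
giving $I(P, \T) \le C_\eps\, \delta^{-\max(s+v-1,\,(s+v+1)/2) - O(\eps)}$. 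Matching with the lower bound and letting $\eps \to 0$ yields $s \ge u+1$ in the regime $s+v \ge 3$ and $s \ge 2u+v-1$ in the regime $s+v < 3$; in either case, $s \ge \min(2u+v-1,\, u+1)$, as desired.

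For sharpness when $u+v \ge 2$ (so $\min(2u+v-1, u+1) = u+1$), take an Ahlfors $u$-regular compact set $K \subset [0,1]$, set $A = K \times [0,1]$ (which has $\dim_H A = u+1$), and let $\calL$ consist of lines whose slopes lie in a Hausdorff $(v-1)$-dimensional set $J \subset [-\tfrac{1}{2}, \tfrac{1}{2}]$ and whose intercepts lie in $[0,1]$. Then $\dim_H \calL = v$ and each intersection $A \cap \ell$ is an affine image of a full-dimensional subset of $K$, hence of dimension $u$, so $A$ is a $(u,v)$-Furstenberg set of dimension exactly $u+1$.
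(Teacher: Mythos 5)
The sharpness half of your argument is fine and essentially the paper's own example (a product $\calC\times[0,1]$ with suitable lines). The gap is in the lower bound, and it sits exactly where you wrote ``this is standard Orponen--Shmerkin-type multiscale pigeonholing'': you assume a discretization producing a cover $P$ of $A$ that is simultaneously a $(\delta, s, \delta^{-\eps})$-set with $s=\dim_H A$, of cardinality $\le \delta^{-s-\eps}$, \emph{and} $\delta^{-u+\eps}$-rich on every tube of a $(\delta,v)$-set $\T$. No standard reduction gives this. The reduction the paper actually invokes (Lemma 3.3 of \cite{hera-improved}) yields a scale $\delta$, a $(\delta,v)$-set of tubes each meeting a $(\delta,u)$-set of at least $\sim\delta^{-u}$ balls, and an upper bound on $|P|$ --- but no non-concentration condition on $P$ itself; a priori the chosen $\delta$-balls can pile up in a small ball (Hausdorff dimension controls neither box counting at the chosen scale nor spatial spread of the cover). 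The natural fixes fail: a Frostman measure of exponent close to $s$ lives on a subset $A'\subset A$ that need not meet the lines in dimension $u$, so richness (iii) is lost; conversely, refining $P$ to enforce the $(\delta,s)$-condition destroys (iii) as well. This is precisely the obstruction the paper flags (``$P$ may contain some over-concentrated pockets'') and circumvents with Theorem \ref{thm:general_furst}: using only the $(\delta,u)$-structure of each slice $P_t$, the over-concentrated dyadic squares are replaced by copies of the product-type set $\calP_w$ (Lemmas \ref{lem:furst_technical} and \ref{lem:easier_dimcheck}), upgrading $P$ to a $(\delta,u+1)$-set without decreasing incidences or increasing $|P|$, after which Theorem \ref{bigbig} is applied with $c^{-1}=\max(u+v,2)$. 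The resulting discretized statement (Theorem \ref{thm:furstenberg-discrete}) makes no spatial assumption on $P$, which is what makes the passage from Hausdorff dimension legitimate. Without supplying a proof of your discretization claim, your application of Theorem \ref{mainbigbig} with $\beta=s$ is unjustified.

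Two smaller points. First, Theorem \ref{mainbigbig} is stated for $\alpha,\beta>1$, and you apply it with $\beta=s=\dim_H A$, which is not known in advance to exceed $1$; the paper avoids this by always feeding the incidence theorem the manufactured $(\delta,u+1)$-set. Second, your exponent arithmetic is correct as far as it goes: with $c^{-1}=\max(s+v-1,2)$ one does get $s\ge u+1$ or $s\ge 2u+v-1$ according to the regime of $s+v$, hence $s\ge\min(2u+v-1,u+1)$, matching the paper's bound obtained with $c^{-1}=\max(u+v,2)$ --- so the route would work \emph{if} the hypothesized discretization existed, but establishing it is the real content and is not a routine pigeonholing step.
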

Note that the bound $\dim_H A \ge 2u + v - 1$ was proved in \cite{molt} for $0 < u, v \le 1$.

As a quick corollary of Theorem \ref{thm:furstenberg}, we obtain the following variant of Marstrand's slicing theorem \cite{marstrand1954some}, which states that for all directions $\theta \in S^1$, then for a.e. line $\ell$ in direction $\theta$, we have $\dim_H (A \cap \ell) \le t-1$. In fact, we are able to bound the dimension of the exceptional set of lines for which $\dim_H (A \cap \ell) > t-1$.

\begin{corollary}\label{cor:marstrand}
    Let $A \subset \R^2$ be a set with $\dim_H (A) = t > 1$, and let $\calL$ be a set of lines such that $\dim_H (A \cap \ell) > t-1$ for all $L \in \calL$. Then $\dim_H (\calL) \le 3-t$.
\end{corollary}

\begin{proof}
    Let $\calL_u$ be the set of lines $\ell$ such that $\dim_H (A \cap \ell) = u$. Suppose that $\dim_H \calL_u \ge 3-t$ for some $u > t-1$. Then $A$ is a $(u, 3-t)$-Furstenberg set with $u + 3 - t \ge 2$. Hence, by Theorem \ref{thm:furstenberg}, we get $\dim_H (A) \ge u+1 > t$, contradiction to $\dim_H (A) = t$. Thus, we actually have $\dim_H (\calL_u) < 3-t$ for all $u > t-1$. Let $\{ u_i \}_{i=1}^\infty$ be a sequence converging to $t-1$ from above. Then $\calL = \cup_{i \ge 1} \calL_{u_i}$, so $\dim_H (\calL) = \sup_{i \ge 1} \dim_H (\calL_{u_i}) \le 3-t$, as desired.
\end{proof}

Our approach also allows us to obtain the following discretized sum-product estimate:
\begin{corollary}\label{cor:sum-product}
    Let $0 < \delta \le 1$, $u, v, v' \in [0, 1]$ with $v + v' > 1$, and $K_u, K_v, K_{v'} \ge 1$. Let $A, B, C \subset [1, 2]$ be sets of disjoint $\delta$-balls such that $A$ is a $(\delta, u, K_u)$-set, $B$ is a $(\delta, v, K_v)$ set, and $C$ is a $(\delta, v', K_{v'})$ set. For a set $E \subset \R$, let $|E|_\delta$ denote the minimum number of $\delta$-balls needed to cover $E$. Then for $c = \max(u + v + v', 2)^{-1}$,
    \begin{equation*}
        \max(|A+B|_\delta, |A \cdot C|_\delta) \gesim K_u^{-\frac{c}{2(1-c)}} K_v^{-\frac{c}{2(1-c)}} \delta^{\frac{c}{2(1-c)} + \eps} |B|^{\frac{c}{2(1-c)}} |C|^{\frac{c}{2(1-c)}} |A|^{\frac{1}{2(1-c)}}.
    \end{equation*}
\end{corollary}
This corollary strengthens Corollary 1.11 of \cite{furstenberg} when $A$ is a $(\delta, s, \delta^{-\eps})$-set with $|A| \sim \delta^{-s}$. If we apply Corollary \ref{cor:sum-product} for $A = B = C$, $K_u = \delta^{-\eps}$, $|A| \sim \delta^{-s}$, and $s \in (\frac{1}{2}, 1)$, we get the non-trivial sum-product estimate
\begin{equation*}
    \max(|A+A|_\delta, |A \cdot A|_\delta) \gesim \begin{cases}
        \delta^{-2s + \frac{1}{2} + 2\eps}, & s < \frac{2}{3}, \\
        \delta^{-\frac{s+1}{2} + 2\eps}, & s > \frac{2}{3}.
    \end{cases}
\end{equation*}
This improves on results of Chen \cite{chen2020discretized} for every $s \in (\frac{1}{2}, 1)$ and Guth, Katz, and Zahl \cite{guth2021discretized} for $1 > s > (\sqrt{1169} - 21)/26 \approx 0.5073$.

Finally, we remark that Theorem \ref{mainbigbig} and Theorem \ref{thm:furstenberg} can be generalized to the case of $\delta$-balls and $\delta$-flats (i.e. $\delta$-neighborhoods of $(n-1)$-planes) in $\R^n$; we will explore this generalization in a subsequent paper.

We conclude this introductory section by describing the organization of the paper.
In Section 2, we will show the estimates in Theorem \ref{main} are sharp up to $C_\e \delta^{-\eps}$, by constructing suitable examples.

We will then prove the upper bound of Theorem \ref{main} by analyzing different cases for $\alpha, \beta$. In Section 3, we will use a combinatorial argument (the $L^2$ argument as in \cite{cordoba}) to resolve the case where $\alpha \le 1$ or $\beta \le 1$. In Section 4, we will induct on scale $\delta$ to prove Theorem \ref{mainbigbig}, the case where $\alpha, \beta \ge 1$. The starting point of this argument will be the Fourier-analytic Proposition 2.1 from \cite{guth}, which was inspired by ideas of Orponen \cite{orp} and Vinh \cite{vinh}. Finally, we derive Theorem \ref{main}, Theorem \ref{thm:furstenberg}, and Corollary \ref{cor:sum-product} in Section 5.

\vspace{4mm}
\noindent
\textbf{Notation.} We will use $A \gesim B$ to represent $A \ge CB$ for a constant $C$, and $A \lesim B$ to represent $A \le CB$. The constant $C$ is independent of the scale $\delta$ and the dimension parameters $\alpha, \beta, K_\alpha, K_\beta$. We will use $A \sim B$ to represent $A \gesim B$ and $A \lesim B$. Finally, we let $A \gesim_\eps B$ to denote $A \ge CB$ for a constant $C$ which depends on $\eps$, and define $A \lesim_\eps B$, $A \sim_\eps B$ similarly.

For a finite set $A$, typically a set of $\delta$-tubes or $\delta$-balls, let $\# A$ or $|A|$ denote its cardinality. For a subset $A \subset \R^2$, let $|A|_\delta$ denote the least number of $\delta$-balls needed to cover $A$.

For a set $P$ of $\delta$-balls and a subset $A \subset \R^2$, let $P \cap A := \{ p \in P \mid p \subset A \}$.

The angle between two $\delta$-tubes $s$ and $t$, or $\angle(s, t)$, is the acute angle between their directions.

For two sets $A$ and $B$ in $\R^2$, we say $A$ and $B$ intersect if $A \cap B \neq \emptyset$.

For a $\delta$-ball $p$ and $S \ge 1$, define the $S$-thickening $p^S$ to be the $S\delta$-ball concentric with $p$. For a $\delta$-tube $t$, let $t^S$ denote the $S\delta$-tube coaxial with $t$. Finally, for a set of $\delta$-balls $P$ (respectively set of $\delta$-tubes $\T$), let $P^S := \{ p^S : p \in P \}$ (respectively $\T^S := \{ t^S : t \in \T \}$).

We say two $\delta$-tubes $s, t$ are \textit{essentially identical} if they intersect and their angle is $\le \delta$. Otherwise, they are essentially distinct, and we say a collection $\T$ of $\delta$-tubes is essentially distinct if the tubes in $\T$ are pairwise essentially distinct.

\vspace{4mm}
\noindent
\textbf{Acknowledgements.}
We wish to thank the MIT SPUR program, funded by the MIT Department of Mathematics, where most of this research was conducted.
We would like to thank Larry Guth for suggesting this problem and helpful discussions, Ankur Moitra and David Jerison for helpful discussions, and Slava Gerovitch for running the SPUR program. We thank Damian D{\k{a}}browski for suggesting the statement and proof of Corollary \ref{cor:marstrand}. Finally, we thank the anonymous referee for helpful suggestions on the exposition and insightful comments which simplified the proof of Theorem \ref{mainbigbig} and led us to formulate Theorem \ref{thm:furstenberg} and Corollary \ref{cor:sum-product}.

\section{Constructions}
We start with the sharpness part of Theorem \ref{main}. We will construct $(\delta, \alpha)$-sets of tubes and $(\delta, \beta)$-sets of balls such that the number of incidences is at least $\delta^{-f(\alpha, \beta)}$, where $f$ was defined as in Theorem \ref{main}. 
We divide the constructions into four cases. Construction 1 is the main construction that works for most $\alpha$ and $\beta$. Constructions 2, 3, 4 can be considered as auxiliary constructions which take care of exceptional values of $\alpha, \beta$ not covered in Construction 1. The constructions will all take place inside a $1 \times 1$ square. In the constructions, some of the $\delta$-tubes may not be fully contained within the $1 \times 1$ square, but we will ignore this minor detail. For ease of notation, let $D = \delta^{-1}$.

\subsection{Construction 1}
In this construction, we assume $\alpha < \beta + 1, \beta < \alpha + 1$, and  $\alpha + \beta < 3$. Let $a = \min(\alpha, 1), b = \min(\beta, 1)$. Our goal is to construct a $(\delta,\beta)$-set of $\delta$-balls and a $(\delta,\alpha)$-set of $\delta$-tubes with at least $\delta^{-\frac{a\alpha + b\beta + ab}{a+b}}$ incidences.

\begin{figure}[h]
    \begin{center}
        \includegraphics[scale=0.8]{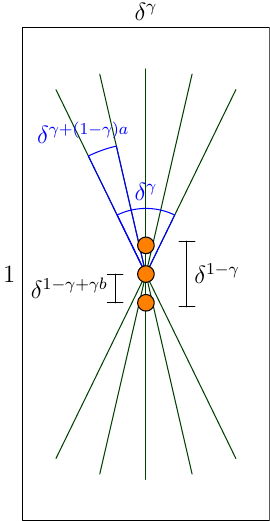}
        \hspace{0.5cm}
        \includegraphics[scale=0.8]{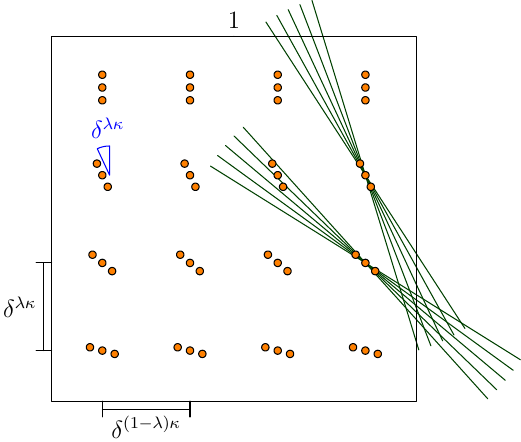}
    \end{center}
    \caption{Construction 1.}
    \label{fig:case1}
\end{figure}

To describe the construction, we will need a few auxiliary variables. Recall $a = \min(\alpha, 1), b = \min(\beta, 1)$, and we will eventually choose $\gamma, \kappa, \lambda$ as parameters in $[0, 1]$.
Refer to Figure \ref{fig:case1}. The left picture depicts a single bundle with $D^{(1-\gamma)a}$ many $\delta$-tubes and $\sim D^{\gamma b}$ many $\delta$-balls. The $\delta$-tubes are rotates of a single central $\delta$-tube $t$, and the angle spacing between $\delta$-tubes is $\delta^{\gamma + (1-\gamma)a}$, so that the maximal angle of two $\delta$-tubes in the bundle is $\delta^\gamma$. By trigonometry, the intersection of all the tubes contains a $\delta \times (\sim \delta^{1-\gamma})$ rectangle with the same center and direction as the central $\delta$-tube $t$. We may thus place $\sim D^{\gamma b}$ many $\delta$-balls in the rectangle, spaced a distance of $\delta^{1-\gamma+\gamma b}$ apart; then each ball of the bundle will intersect each tube in the bundle. Furthermore, since the maximum angle between two $\delta$-tubes in the bundle is $\delta^\gamma$, we see that the  bundle fits inside a $\delta^\gamma \times 1$ rectangle. 

It might be helpful to observe that the configuration of $\delta$-balls is ``dual'' to the configuration of $\delta$-tubes in a bundle, in the sense that the $\delta$-balls in a bundle are evenly spaced along the central axis, while the $\delta$-tubes are evenly spaced in direction.

    

In the right picture, there are $D^\kappa$ bundles in $[0, 1]^2$. The bundles are arranged in a $D^{(1-\lambda)\kappa} \times D^{\lambda\kappa}$ grid, with the horizontal spacing $\delta^{(1-\lambda)\kappa}$ and the vertical spacing $\delta^{\lambda\kappa}$. The bundles in the same row are translates of each other; two adjacent bundles in the same column are $\delta^{\lambda \kappa}$ rotates of each other. 

If $\T$ is the set of $\delta$-tubes and $P$ is the set of $\delta$-balls in the configuration, then we see that $|\T| \sim D^{(1-\gamma)a + \kappa}$ and $|P| \sim D^{\gamma b + \kappa}$.

Intuitively, we can regard $\lambda$ as controlling the ``aspect ratio'' of the bundle configuration. If $\lambda = 1$, then all the bundles are rotated copies of each other, arranged vertically; if $\lambda = 0$, then all the bundles are translated copies of each other, arranged horizontally. For the right values of $\lambda$, our constructed $\T$ will be a $(\delta, \alpha)$-set of $\delta$-tubes and $P$ will be a $(\delta, \beta)$-set of $\delta$-balls.

Now, we choose suitable values for our parameters $\gamma, \kappa, \lambda$. We first choose $\gamma = \frac{a - \alpha + \beta}{a + b}$ and $\kappa = \alpha - (1-\gamma)a = \frac{a \beta + b \alpha - ab}{a + b}$. Then, we apply the following Lemma to choose $\lambda$ and also check that $\gamma, \kappa \in [0, 1]$.
\begin{lemma}\label{app2}
(a) We have $0 \le \gamma, \kappa \le 1$.

(b) There exists $0 \le \lambda \le 1$ such that $\T$ is a $(\delta, \alpha)$-set of $\delta$-tubes and $P$ is a $(\delta, \beta)$-set of $\delta$-balls.
\end{lemma}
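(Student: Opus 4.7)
\textbf{Proof plan for Lemma \ref{app2}.}

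For part (a), the plan is a direct case analysis on the four sub-regions cut out by the lines $\alpha = 1$ and $\beta = 1$. In each case $a = \min(\alpha,1)$ and $b = \min(\beta,1)$ specialize to explicit values, making $\gamma = (a-\alpha+\beta)/(a+b)$ and $\kappa = (a\beta + b\alpha - ab)/(a+b)$ ratios of linear expressions in $\alpha, \beta$. The non-negativity of $\gamma$ and $\kappa$ is elementary (for $\gamma$ in the region $\alpha > 1$ it uses $\beta > \alpha - 1$); the upper bounds each reduce to one of the hypotheses of Construction 1. For example, when $\alpha, \beta > 1$ one has $\gamma = (1-\alpha+\beta)/2$ and $\kappa = (\alpha+\beta-1)/2$, so $\gamma \le 1$ is $\beta \le \alpha+1$ and $\kappa \le 1$ is $\alpha + \beta \le 3$. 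The remaining three sub-regions (including the mixed cases $\kappa = \alpha\beta/(\alpha+1)$ or $\alpha\beta/(\alpha+\beta)$) are analogous.

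For part (b), I would verify the $(\delta,\alpha)$ and $(\delta,\beta)$ spacing conditions by inspecting a short list of critical scales. The crucial initial reduction is that \emph{within a single bundle} both conditions already hold, using $a \le \alpha$ and $b \le \beta$: the $D^{(1-\gamma)a}$ tubes of one bundle lie in a $\delta^\gamma$-angular slab with $D^{(1-\gamma)a} \le D^{(1-\gamma)\alpha}$, and the $D^{\gamma b}$ balls lie in a $\delta \times \delta^{1-\gamma}$ rectangle with $D^{\gamma b} \le D^{\gamma \beta}$. Two bookkeeping identities make this work globally as well: the definition of $\kappa$ gives $(1-\gamma)a + \kappa = \alpha$, and a parallel calculation yields $\gamma b + \kappa = \beta$, so the total tube count $D^{(1-\gamma)a+\kappa}$ equals $D^\alpha$ and the total ball count $D^{\gamma b + \kappa}$ equals $D^\beta$, matching the endpoints $w = 1$ of both spacing conditions.

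It remains to handle intermediate scales that cross bundle boundaries. At such a scale $w$, the number of bundles meeting a given $w$-tube or $w$-ball is governed by the grid spacings $\delta^{(1-\lambda)\kappa}$ and $\delta^{\lambda\kappa}$ and the matching angular increment $\delta^{\lambda\kappa}$; multiplying by the per-bundle count $D^{(1-\gamma)a}$ or $D^{\gamma b}$ gives the total. Each critical-scale inequality rearranges into a linear constraint on $\lambda$: the tube condition tends to bound $\lambda$ below (to prevent tubes from clustering angularly in a thin slab) while the ball condition bounds $\lambda$ above (to prevent balls from piling up in a $w$-ball). I would organize the verification by the same four sub-regions as in part (a), and in each one exhibit a natural $\lambda$, e.g.\ $\lambda = b/(a+b)$ or a small variant, and verify the resulting inequalities directly.

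The main obstacle will be showing that the lower and upper bounds on $\lambda$ coming from the tube and ball conditions are mutually compatible and lie in $[0,1]$. I expect this compatibility to reduce, after simplification using the identities $(1-\gamma)a + \kappa = \alpha$ and $\gamma b + \kappa = \beta$, to precisely the three strict inequalities $\alpha < \beta + 1$, $\beta < \alpha+1$, $\alpha + \beta < 3$ that define the validity of Construction 1. The bookkeeping in each sub-region, particularly handling the several possible orderings of the scales $\delta^\gamma$, $\delta^{1-\gamma}$, $\delta^{(1-\lambda)\kappa}$, $\delta^{\lambda\kappa}$, is the main source of work; the underlying inequalities, however, should all reduce to the same geometric input.
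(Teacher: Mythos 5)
Your outline matches the paper's proof in broad strokes: part (a) is indeed a four-region case check (the paper records it as a small table of $\gamma,\kappa$ values), and part (b) is verified exactly by the row $\times$ column $\times$ per-bundle counting you describe, with the identities $(1-\gamma)a+\kappa=\alpha$ and $\gamma b+\kappa=\beta$. But the decisive step of (b) is left as a plan rather than an argument, and the one concrete suggestion you make for $\lambda$ is problematic. The paper does not choose $\lambda$ region by region; it isolates a sufficient ``defining condition'' on $\lambda$ --- most importantly $0\le\lambda\le\min(\gamma,1-\gamma)$ together with $(1-\gamma)a(a+1-\alpha)+\max(\lambda,1-\lambda)\kappa\le a$ and its $\beta$-analogue --- and shows $\lambda=\min(\gamma,1-\gamma)$ always satisfies it. The constraint $\lambda\le\gamma$ is what makes your bundle-counting formula legitimate: it guarantees the angular increment $\delta^{\lambda\kappa}$ between rows is at least the angular width $\delta^\gamma$ of one bundle, so distinct rows occupy essentially disjoint angular slabs and a $w\times 2$ tube meets only $\lceil w/\delta^{\lambda\kappa}\rceil$ rows; the constraint $\lambda\le 1-\gamma$ plays the analogous role for keeping sticks of balls in different rows disjoint. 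Your candidate $\lambda=b/(a+b)$ violates this in part of the admissible region: e.g.\ $\alpha=1.9$, $\beta=1.05$ gives $\gamma=0.075$ but $b/(a+b)=\tfrac12$, so the rows' angular ranges overlap massively and the clean factorization ``(rows met) $\times$ (columns met) $\times$ (tubes per bundle)'' no longer bounds the count without a much more delicate argument exploiting the coupling between a bundle's vertical position and its rotation. So the compatibility question does not ``reduce precisely'' to the three strict inequalities of Construction 1; those inequalities guarantee that some valid $\lambda$ exists, but not that your proposed one is valid.

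A second, smaller omission: even within a single bundle, checking the spacing condition at the endpoint scales ($w=\delta^\gamma$ for tubes, $w=\delta^{1-\gamma}$ for balls, where $a\le\alpha$, $b\le\beta$ suffice, as you note) is not enough; you must also control all intermediate $w$, where the per-bundle count is $\min(\lceil w/\delta^{\gamma+(1-\gamma)a}\rceil, D^{(1-\gamma)a})$. The paper handles this, and the cross terms with the row/column factors, in one stroke via the interpolation inequality $\min(x,y)\le x^{c}y^{1-c}$ (giving e.g.\ $\min(w/\delta^{\gamma+(1-\gamma)a}, D^{(1-\gamma)a})\le (w/\delta)^a\le(w/\delta)^\alpha$), together with the defining condition on $\lambda$ to absorb the factors $w/\delta^{\lambda\kappa}$ and $w/\delta^{(1-\lambda)\kappa}$. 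Without specifying this mechanism (or an equivalent concavity/endpoint argument) and without a choice of $\lambda$ that provably satisfies the needed constraints in all four sub-regions, the proposal as written does not yet constitute a proof of part (b), even though its skeleton is the right one.
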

The proof is computational, and we defer it to the Appendix. Now with this choice of parameters, we find $|\T| \sim D^{(1-\gamma)a + \kappa} = D^\alpha$ and $|P| \sim D^{\gamma b + \kappa} = D^\beta$.
Finally, since $|\T| \sim D^\alpha$, and each $\delta$-tube $t \in \T$ intersects $\sim D^{\gamma b}$ many $\delta$-balls of the bundle of $t$, we get $I(P, \T) \gesim D^{\alpha} D^{\gamma b} = D^{\frac{a\alpha + b\beta + ab}{a+b}}$.
\begin{figure}[h]
    \includegraphics[scale=0.8]{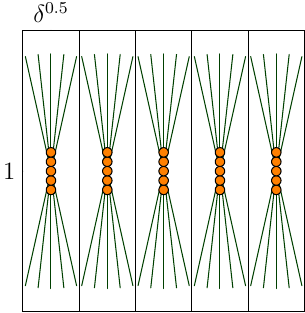}
        
        
        
        
        
    \caption{The special case $\alpha = \beta = 1$.}
    \label{fig:special}
\end{figure}

The prototypical example is $\alpha = 1, \beta = 1$, in which $\gamma = \kappa = 0.5$. In this case, the possible values for $\lambda$ are $0 \le \lambda \le \frac{1}{2}$. If we choose $\lambda = 0$, then we get a series of $D^{0.5}$ horizontally spaced, parallel bundles, as in Figure \ref{fig:special}.


\subsection{Construction 2}
For this construction, we will assume $\alpha \ge \beta + 1$. Our goal is to obtain $\delta^{-(\beta + 1)}$ incidences.

\begin{figure}
\begin{center}
\includegraphics[scale=0.8]{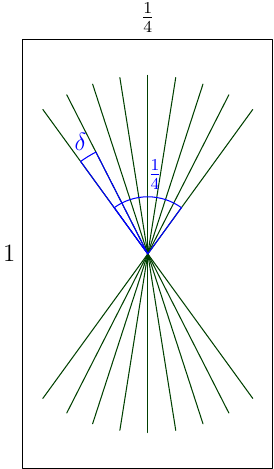}
\hspace{0.2cm}
\includegraphics[scale=0.8]{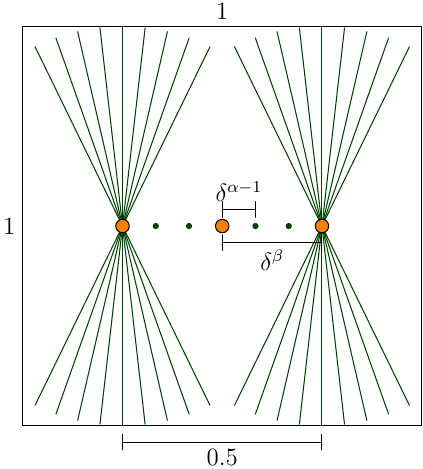}
\end{center}
\caption{Construction 2.}
\label{fig:case2}
\end{figure}

Refer to Figure \ref{fig:case2}. In each bundle, there are $\sim D$ many $\delta$-tubes, each separated by angle $\delta$. 
Thus, we can fit the bundle inside a $\frac{1}{4} \times 1$ rectangle. We arrange $\sim D^{\alpha-1}$ bundles as in the right figure, separated by distance $\sim \delta^{\alpha-1}$, such that the centers of the bundles lie within a segment of length $\frac{1}{2}$ centered at the unit square's center. Then, we place $D^\beta$ many $\delta$-balls at some of the centers of the bundles, such that the $\delta$-balls are $\delta^\beta$-separated. Thus, there are $D^\alpha$ many $\delta$-tubes and $D^\beta$ many $\delta$-balls in the configuration.

Let $\T$ be the set of $\delta$-tubes and $P$ be the set of $\delta$-balls. We will show that $\T$ is a $(\delta, \alpha)$-set of tubes and $P$ is a $(\delta, \beta)$-set of balls.

Fix $w \in [\delta, 1]$ and a $w \times 2$ rectangle $R_w$; we will count how many $\delta$-tubes in $\T$ are in $R_w$. There are two main contributions.

\begin{itemize}
    \item $R_w$ can contain tubes from $\lesim \lceil \frac{w}{\delta^{\alpha - 1}} \rceil$ bundles of $|\T|$.
    
    \item For each bundle, $R_w$ can contain $\lesim \frac{w}{\delta}$ $\delta$-tubes.
\end{itemize}

Thus, $R_w$ contains at most $N$ $\delta$-tubes in $\T$, where (using $w \in [\delta, 1]$ and $\alpha \ge 1$):
\begin{equation*}
    N \lesim \left( \frac{w}{\delta^{\alpha - 1}} + 1 \right) \cdot \frac{w}{\delta} = \frac{w^2}{\delta^\alpha} + \frac{w}{\delta} \le 2 \cdot \left( \frac{w}{\delta} \right)^\alpha.
\end{equation*}
This means $\T$ is a $(\delta, \alpha)$-set of tubes.

Now, we verify that $P$ is a $(\delta, \beta)$-set of balls. Fix $w \in [\delta, 1]$ and a ball $B_w$ of radius $w$; we will count how many $\delta$-balls in $P$ are in $B_w$. Note that $B_w$ can intersect at most $N$ many $\delta$-balls, where (using $\beta \le \alpha - 1 \le 1$):
\begin{equation*}
    N \lesim \lceil \frac{w}{\delta^\beta} \rceil \le \frac{w}{\delta^\beta} + 1 \le 2 \left( \frac{w}{\delta} \right)^\beta.
\end{equation*}

Thus, $P$ is a $(\delta, \beta)$-set. Finally, each $\delta$-ball in $P$ intersects $\sim D$ many $\delta$-tubes of $\T$, so $I(P, \T) \sim D^\beta \cdot D = D^{\beta + 1}$.

\subsection{Construction 3}
For this construction, we will assume $\beta \ge \alpha + 1$. Our goal is to obtain $\delta^{-(\alpha + 1)}$ incidences.

\begin{figure}[h]
\includegraphics[scale=0.8]{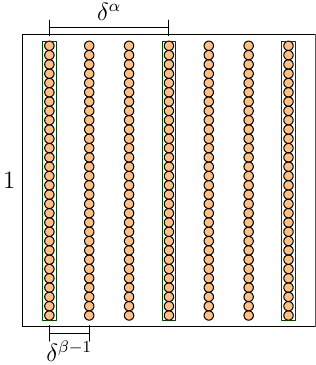}
    
    
    
    
    
    
\caption{Construction 3.}
\label{fig:case3}
\end{figure}

Refer to Figure \ref{fig:case3}. There are $D^{\beta-1}$ columns of $D$ many $\delta$-balls each. On $D^\alpha$ of the columns, there is a $\delta$-tube. The $\delta$-tube-containing columns are separated by distance $\delta^\alpha$. Thus, there are $D^\beta$ $\delta$-balls and $D^\alpha$ $\delta$-tubes. Note that Construction 3 is ``dual'' to Construction 2, in the sense that a bundle of direction-separated $\delta$-tubes is replaced by a bundle of evenly-spaced $\delta$-balls.

The $\delta$-tubes are a $(\delta, \alpha)$-set of tubes and the $\delta$-balls are a $(\delta, \beta)$-set of balls by a similar argument to Construction 2. Finally, each $\delta$-tube contains $D$ many $\delta$-balls, so $I(P, \T) = D^\alpha \cdot D = D^{\alpha + 1}$.

\subsection{Construction 4}
For this construction, we will assume $\alpha + \beta \ge 3$. Our goal is to obtain $\delta^{-(\alpha + \beta - 1)}$ incidences.

\begin{figure}[h]
\includegraphics[scale=0.8]{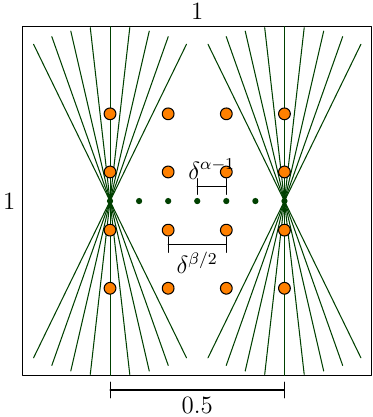}
    
    
    
    
    
    
    
\caption{Construction 4.}
\label{fig:case4}
\end{figure}

Refer to Figure \ref{fig:case4}. The bundles of $\delta$-tubes are the same as Construction 2. We then arrange $\sim D^\beta$ many $\delta$-balls in a $D^{\beta/2} \times D^{\beta/2}$ grid, such that adjacent $\delta$-balls are separated by distance $\sim \delta^{\beta/2}$. We confine the $\delta$-balls to a $\frac{1}{2} \times \frac{1}{2}$ square $\calS$ concentric with the large $1 \times 1$ square. Let $\T$ be the set of $\delta$-tubes and $P$ be the set of $\delta$-balls in this configuration.

From Construction 2, $\T$ is a $(\delta, \alpha)$-set of tubes. We now show that $P$ is a $(\delta, \beta)$-set of balls.

Fix $w \in [\delta, 1]$ and a ball $B_w$ of radius $w$; we will count how many $\delta$-balls in $P$ are in $B_w$. Note that $B_w$ can intersect at most $N$ many $\delta$-balls in $P$, where
\begin{equation*}
    N \lesim \left( \lceil \frac{w}{\delta^{\beta/2}} \rceil \right)^2 \le \left( \frac{w}{\delta^{\beta/2}} + 1 \right)^2 \le 2 \left( \frac{w^2}{\delta^\beta} + 1 \right) \le 4\left( \frac{w}{\delta} \right)^\beta.
\end{equation*}
Also, the $\delta$-balls in $P$ are essentially distinct, so $P$ is a $(\delta, \beta)$-set of balls.

Finally, we will count the number of incidences. For a bundle centered at some point $O \in \calS$, the $\delta$-tubes in the bundle cover a double cone with apex $O$ and angle $\frac{1}{4}$. This double cone intersects square $\calS$ in a polygonal region with positive area, so it contains a positive fraction of the balls in $P$. Hence, the number of incidences between a given bundle and $P$ is $\gesim D^\beta$. There are $D^{\alpha-1}$ bundles in $\T$, so $I(P, \T) \gesim D^\beta D^{\alpha-1} = D^{\alpha + \beta - 1}$.

\section{Combinatorial upper bound}
We will first prove the upper bound for $\alpha \le 1$ or $\beta \le 1$. We further casework on whether $\alpha < \beta$ or $\alpha \ge \beta$, which are handled by Theorems \ref{ballsdom} and \ref{tubesdom} below.
\begin{theorem}\label{ballsdom}
Let $P$ be a $(\delta, \beta, K_\beta)$-set of balls and $\T$ be a $(\delta, \alpha, K_\alpha)$-set of tubes. Let $D = \delta^{-1}$. Let $b = \min(\beta, 1)$, and assume $b \ge \alpha$. Then for any $\eps > 0$, there exists $C_\eps > 0$ such that
\begin{equation*}
    I(P, \T)^{\alpha + b} \le C_\eps D^{\alpha b (1+\eps)} K_\beta^\alpha K_\alpha^b |P|^b |\T|^\alpha.
\end{equation*}
\end{theorem}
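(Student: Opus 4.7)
The approach is the $L^2$/Córdoba argument from \cite{cordoba}, but with a two-regime analysis of the off-diagonal count that is the key novelty. First I would dyadically pigeonhole so that every $p \in P' \subset P$ has $\mu_p := \#\{t \in \T : p \cap t \neq \emptyset\} \sim \mu$ and every $t \in \T' \subset \T$ has $\nu_t \sim \nu$, losing at most $\mathrm{polylog}(D)$ factors that will be absorbed into $D^\eps$. After this, $I(P',\T') \sim |P'|\mu \sim |\T'|\nu$, and the desired bound becomes equivalent, after substituting $\mu = I/|P'|$ and $\nu = I/|\T'|$, to the cleaner inequality
\[
  \mu^b \nu^\alpha \lesim D^{\alpha b(1+\eps)} K_\alpha^b K_\beta^\alpha.
\]

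The main step is Cauchy-Schwarz: $I(P',\T')^2 \leq |P'| \sum_{p \in P'} \mu_p^2$. Expanding the square gives a diagonal contribution $\sum_p \mu_p = I(P',\T')$ and an off-diagonal sum, over pairs $(t_1, t_2) \in \T' \times \T'$ with $t_1 \neq t_2$, of $\#\{p \in P' : p \in t_1 \cap t_2\}$. I would organize the off-diagonal by the dyadic angle $\theta \in [\delta, 1]$ between $t_1$ and $t_2$ and bound the contribution of each angle scale by the minimum of two estimates. From the ball side, the $\alpha$-spacing applied to an $O(\theta) \times 2$ rectangle through a fixed $p$ shows that each of the $\mu$ tubes through $p$ has at most $K_\alpha(\theta/\delta)^\alpha$ partners at angle $\sim \theta$, yielding a total of $\leq I \cdot K_\alpha(\theta/\delta)^\alpha$. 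From the tube side, for each $t_1$ there are $\leq K_\alpha(\theta/\delta)^\alpha$ tubes $t_2$ meeting $t_1$ at angle $\sim \theta$, and the intersection $t_1 \cap t_2$ sits in a $(\delta/\theta) \times \delta$ rectangle; this region is either contained in one $(\delta/\theta)$-ball (giving $\leq K_\beta \theta^{-\beta}$ balls of $P'$) or covered by a chain of $\sim \theta^{-1}$ many $\delta$-balls (giving $\leq K_\beta \theta^{-1}$ balls), so contains at most $K_\beta \theta^{-b}$ balls of $P'$. This side thus contributes $\leq |\T'| K_\alpha K_\beta (\theta/\delta)^\alpha \theta^{-b}$.

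The two bounds cross at $\theta_\ast = (K_\beta/\nu)^{1/b}$, and the assumption $\alpha \leq b$ makes the sum of pointwise minima over dyadic $\theta \in [\delta, 1]$ concentrate at $\theta \sim \theta_\ast$, producing off-diagonal $\lesim I \cdot K_\alpha D^\alpha (K_\beta/\nu)^{\alpha/b}$ up to a $\log D$ loss. Plugging back into the Cauchy-Schwarz bound and dividing through by $I = |P'|\mu$ gives $\mu \lesim 1 + K_\alpha D^\alpha K_\beta^{\alpha/b} \nu^{-\alpha/b}$. The main term rearranges to the target $\mu^b \nu^\alpha \lesim K_\alpha^b K_\beta^\alpha D^{\alpha b}$, while the linear regime $\mu = O(1)$ is handled by the trivial tube bound $\nu_t \leq K_\beta D^b$ (whose $D^b$ factor uses the same chain-cover argument applied to $t$ itself).

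I expect the main obstacle to be recognizing and applying the refined $K_\beta \theta^{-b}$ estimate on balls in $t_1 \cap t_2$: the naive ball-based bound $K_\beta \theta^{-\beta}$ produces $D^\beta$ in the final estimate, losing a factor of $D^{\beta - 1}$ when $\beta > 1$ and failing to yield the claimed exponent $D^{\alpha b}$. The chain-cover alternative is precisely what distinguishes $b$ from $\beta$ throughout the argument.
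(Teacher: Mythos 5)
Your argument is correct and yields the stated bound, but it is organized differently from the paper's proof. The paper runs a ``fractional C\'ordoba'' argument with no pigeonholing and no Cauchy--Schwarz: it bounds $J(P,\T)=\sum_{p}|\T(p)|^{(b+\alpha)/\alpha}$ via H\"older against $I(P,\T)$, and then estimates, for each fixed tube $t$, the quantity $j(t)=\sum_{p\cap t\neq\emptyset}|\T(p)|^{b/\alpha}$ by splitting the partner tubes into dyadic angle classes $\T_w(t)$. Its two geometric inputs are exactly the ones you isolate: at angle $\sim w$ there are $\lesim K_\alpha(w/\delta)^\alpha$ partner tubes, and the $\delta\times\delta/w$ common region contains $\lesim K_\beta w^{-b}$ balls, with the same dichotomy (chain of $\sim w^{-1}$ many $O(\delta)$-balls when $\beta\ge1$ versus a single $\delta/w$-ball when $\beta<1$) that produces $b=\min(\beta,1)$ --- so your closing remark correctly identifies the crux. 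The difference is bookkeeping: because the exponent $b/\alpha\ge1$ is built into the functional, in the paper every dyadic angle contributes the same amount $K_\alpha^{b/\alpha}K_\beta D^{b}$ and no crossing-point analysis or popularity reduction is needed, whereas your $L^2$ plus pigeonhole scheme recovers the same numerology by locating the critical angle $\theta_*=(K_\beta/\nu)^{1/b}$ by hand and then bootstrapping $\mu^{b}\nu^{\alpha}\lesim K_\alpha^{b}K_\beta^{\alpha}D^{\alpha b}$; your treatment of the $\mu=O(1)$ regime via $\nu\lesim K_\beta D^{b}$ is the right closing step. Two small points to tighten: the dyadic pigeonholing in $\mu$ and $\nu$ loses factors of $\log(|P|\,|\T|)$, which a priori involve $\log K_\alpha,\log K_\beta$ while the theorem's constant depends only on $\eps$; since both of your angle-scale estimates are already aggregate bounds, you can avoid this entirely by running the off-diagonal argument with the averages $I/|P|$ and $I/|\T|$ in place of $\mu,\nu$. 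Also, the pairs $(t_1,t_2)$ in the off-diagonal need not intersect each other, only come within $2\delta$ (they share a ball); the $O(\theta)\times 2$ rectangle count and the $\sim\delta\times\delta/\theta$ common region still apply in that case, and the near-parallel block $\theta\sim\delta$ is handled by your ball-side bound, so neither point affects the structure of the proof.
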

\begin{theorem}\label{tubesdom}
Let $P$ be a $(\delta, \beta, K_\beta)$-set of balls and $\T$ be a $(\delta, \alpha, K_\alpha)$-set of tubes. Let $D = \delta^{-1}$. Let $a = \min(\alpha, 1)$ and assume $a \ge \beta$. Then for any $\eps > 0$, there exists $C_\eps > 0$ such that
\begin{equation*}
    I(P, \T)^{a + \beta} \le C_\eps D^{a \beta (1+\eps)} K_\beta^a K_\alpha^\beta |P|^\beta |\T|^a.
\end{equation*}
\end{theorem}

\begin{proof} We will first prove Theorem \ref{ballsdom}. Then to prove Theorem \ref{tubesdom}, it suffices to prove Theorem \ref{ballsdom} and apply duality. For more details on duality, see Section 6.1 of \cite{furstenberg}. Hence, we will concentrate on proving Theorem \ref{ballsdom}.

\textbf{Notation.} For a $\delta$-tube $t$ and $\delta$-ball $p$, we use $p \intersect t$ to denote $p \cap t \neq \emptyset$. 

If $\alpha = 0$ then the result follows from the trivial bound $I(P, \T) \le |P| |\T| \lesim |P| \cdot K_\alpha$, so we assume $\alpha > 0$. Recall that $I(P, \T)$ is the number of pairs $(t, p) \in \T \times P$ such that $p \intersect t$. Let $\T(p) = \{ t \in \T \mid p \intersect t \}$. Define
\begin{equation*}
    J(P, \T) = \sum_{p \in P} |\T(p)|^{(b + \alpha)/\alpha}.
\end{equation*}
We first relate $J$ to $I$.  By H\"{o}lder's inequality, we have
\begin{equation}\label{iandj}
    J^{\alpha} \ge \frac{1}{|P|^b} \left( \sum_{p \in P} |\T(p)| \right)^{b + \alpha} = \frac{I(P, \T)^{b + \alpha}}{|P|^b}.
\end{equation}
Next, we estimate $J(P, \T)$. For a given $\delta$-tube $t \in \T$, let
\begin{equation*}
    j(t) = \sum_{p \intersect t} |\T(p)|^{b/\alpha}.
\end{equation*}
Then, we have
\begin{multline}\label{Jandj}
    J(P, \T) = \sum_{p \in P} |\T(p)| \cdot |\T(p)|^{b/\alpha} = \sum_{p \in P} \sum_{t \sim p} |\T(p)|^{b/\alpha} \\ = \sum_{t \in \T} \sum_{p \sim t} |\T(p)|^{b/\alpha} = \sum_{t \in \T} j(t).
\end{multline}
The main claim is the following:
\begin{lemma}\label{mainclaim}
There exists $C_\eps > 0$ such that $j(t) \lesim C_\eps K_\alpha^{b/\alpha} K_\beta D^{b + \eps/\alpha}$ for any $t \in \T$.
\end{lemma}
To prove Lemma \ref{mainclaim}, we introduce some notation. Let $\T_\delta (t) = \{ s \in \T \mid s \cap t \neq \emptyset, \angle(s, t) \le 2\delta \}$ and for $w \ge 2\delta$,
\begin{equation*}
    \T_w (t) = \{ s \in \T \mid s \cap t \neq \emptyset, w \le \angle(s, t) \le 2w \}.
\end{equation*}
We will now prove two lemmas involving $\T_w (t)$.
\begin{lemma}\label{intersect2}
$|\T_w (t)| \lesim K_\alpha \left( \frac{w}{\delta} \right)^\alpha$.
\end{lemma}

\begin{proof}
Let $R$ be the $200w \times 2$ rectangle with the same center as $t$ such that the length-2 side of $R$ is parallel to the length-1 side of $t$.
By trigonometry, we observe that any $\delta$-tube $s \subset [0, 1]^2$ with $s \cap t \neq \emptyset$ and $\angle(s, t) \le 2w$ must be contained in $R$. Since $\T$ is a $(\delta, \alpha, K_\alpha)$-set of tubes, there are at most $K_\alpha \cdot \left( \frac{200w}{\delta} \right)^\alpha$ tubes of $\T$ contained in $R$. Thus, since $200^\alpha \le 200^2 \lesim 1$, we obtain the desired bound $|\T_w (t)| \lesim K_\alpha \cdot \left( \frac{w}{\delta} \right)^\alpha$.
\end{proof}
\begin{lemma}\label{doublecount}
Fix $t \in \T$. For any $\delta < w < \frac{\pi}{2}$, we have
    \begin{equation*}
        \sum_{p \intersect t} |\T(p) \cap \T_w (t)| \lesim |\T_w (t)| \cdot \frac{K_\beta}{w^b}.
    \end{equation*}
\end{lemma}

\begin{proof}
We use a double counting argument. The left-hand side counts the number of pairs $(p, s) \times P \times \T_w (t)$ with $p \intersect s$ and $p \intersect t$. For each $s \in \T_w (t)$, $s \cap t$ is contained in a $\delta \times \frac{\delta}{w}$ rectangle $R_w$. To upper-bound the number of $\delta$-balls of $P$ in $R_w$, we split into cases:
\begin{itemize}
    \item If $\beta \ge 1$, then cover $R_w$ with $\sim \frac{1}{w}$ many $10\delta$-balls $q_i$, such that any $\delta$-ball that intersects $R_w$ must lie in some $q_i$. By dimension, $q_i$ contains at most $\lesim K_\beta \cdot 10^\beta \lesim K_\beta$ many $\delta$-balls of $P$, so $R_w$ intersects $\lesim \frac{1}{w} K_\beta$ many $\delta$-balls of $P$.
    
    \item If $\beta < 1$, then $R_w$ is contained in a ball of radius $\frac{\delta}{w}$, so since $P$ is a $(\delta, \beta, K_\beta)$-set, we see that $R_w$ intersects $\lesim K_\beta \left( \frac{1}{w} \right)^\beta$ many $\delta$-balls of $P$.
\end{itemize}
Thus, for each $s \in \T_w (t)$, there are at most $\lesim K_\beta \left( \frac{1}{w} \right)^b$ many $\delta$-balls $p \in P$ with $p \sim s$ and $p \sim t$, which proves the Lemma.
\end{proof}

\begin{proof}[Proof of Lemma \ref{mainclaim}]
Now let $\calK = \{ \delta, 2\delta, 4\delta, \dots, \delta \cdot 2^{\lceil \log_2 \delta^{-1} \rceil} \}$, $C_1 = |\calK|^{b/\alpha - 1}$, and $C = C_1 |\calK|$. Since $|\calK| = \log D$, we have $C \lesim_\eps D^{\eps/\alpha}$. Now, we perform the following calculation: 
\begin{align*}
    j(t) &= \sum_{p \intersect t} \left( \sum_{w \in \calK} |\T(p) \cap \T_w (t)| \right)^{b/\alpha} \\
        &\le \sum_{p \intersect t} C_1 \sum_{w \in \calK} |\T(p) \cap \T_w (t)|^{b/\alpha} \\
        &\le C_1 \sum_{w \in \calK} \sum_{p \intersect t} |\T(p) \cap \T_w (t)| \cdot |\T_w (t)|^{b/\alpha - 1} \\
        &\lesim C_1 \sum_{w \in \calK} |\T_w (t)|^{b/\alpha} \cdot \frac{K_\beta}{w^b} \\
        &\lesim C \cdot K_\alpha^{b/\alpha} K_\beta \cdot D^b \lesim_\eps K_\alpha^{b/\alpha} K_\beta \cdot D^{b+\eps/\alpha}.
\end{align*}
The first line uses the fact that the sets $\T_w (t)$ cover $\T(p)$. The second line follows from H\"{o}lder's inequality. The third line follows from $|\T(p) \cap \T_w (t)| \le |\T_w (t)|$ and $\frac{b}{\alpha} \ge 1$. The fourth line follows from Lemma \ref{doublecount}. The fifth line follows from Lemma \ref{intersect2}.
\end{proof}

Using Equations \eqref{iandj}, \eqref{Jandj} and Lemma \ref{mainclaim}, we can finish the proof of Theorem \ref{ballsdom}.
\begin{equation*}
    I(P, \T)^{b + \alpha} \le |P|^b J^\alpha \lesim_\eps |P|^b \left( \sum_{t \in \T} K_\alpha^{b/\alpha} K_\beta \cdot D^{b+\eps/\alpha} \right)^\alpha = K_\beta^\alpha K_\alpha^b |P|^b |\T|^\alpha D^{b \alpha + \eps}.
\end{equation*}
This is the desired result.
\end{proof}

\section{Fourier analytic upper bound}
We will now prove an upper bound for $I(P, \T)$ when $P$ is a $(\delta, \alpha, K_\alpha)$-set and $\T$ is a $(\delta, \beta, K_\beta)$-set of tubes, for $\alpha \ge 1$ and $\beta \ge 1$. The proof method is using the high-low method in Fourier analysis.

\subsection{A Fourier Analytic result}


    
We will need a variant of Proposition 2.1 from \cite{guth}. The version presented here is a modest refinement of \cite[Proposition 2.1] {bradshaw2020incidence}. First, we review some notation. We say two $\delta$-tubes $s, t$ are \textit{essentially identical} if they intersect and their angle is $\le \delta$. Otherwise, they are essentially distinct, and we say a collection $\T$ of $\delta$-tubes is essentially distinct if the tubes in $\T$ are pairwise essentially distinct.

For a $\delta$-ball $p$ and $S \ge 1$, define the $S$-thickening $p^S$ to be the $S\delta$-ball concentric with $p$. For a $\delta$-tube $t$, let $t^S$ denote the $S\delta$-tube coaxial with $t$. Finally, for a set of $\delta$-balls $P$ (respectively set of $\delta$-tubes $\T$), let $P^S := \{ p^S : p \in P \}$ (respectively $\T^S := \{ t^S : t \in \T \}$).
\begin{prop}\label{twopointone}
Fix a small $\eps > 0$, and $\delta \le 1, D = \delta^{-1}$. There exists a constant $C_\eps$ with the following property: Suppose that $P$ is a set of $\delta$-balls and $\T$ is a set of $\delta$-tubes contained in $[0,1]^2$ such that every $p \in P$ intersects at most $K_\beta$ many $\delta$-balls of $P$ (including $p$ itself) and every $t \in \T$ is essentially identical to at most $K_\alpha$ many $\delta$-tubes of $\T$. Let $S = D^{\eps/20}$. Then we have the incidence estimate
\begin{equation}\label{eqn21}
    I(P, \T) \lesim_\eps S^{1/2} \cdot D^{1/2} K_\alpha^{1/2} K_\beta^{1/2} |P|^{1/2} |\T|^{1/2} + S^{-1+\eps/2} I(P^S, \T^S).
\end{equation}
\end{prop}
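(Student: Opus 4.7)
My plan is to reduce the desired inequality to a Fourier-analytic $L^2$ estimate, splitting scales at frequency $(S\delta)^{-1}$. Choose Schwartz bumps $\psi_t\gesim\mathbf{1}_t$ and $\psi_p\gesim\mathbf{1}_p$ whose Fourier transforms are supported in a fixed dilate of the dual rectangle $R_t^*=\{\xi:|\xi\cdot\theta_t|\le 1,\ |\xi\cdot\theta_t^\perp|\le\delta^{-1}\}$ and of $B(0,\delta^{-1})$ respectively. Since $\int\psi_p\psi_t\gesim\delta^2$ whenever $p\cap t\neq\emptyset$, with $F=\sum_{t\in\T}\psi_t$ and $G=\sum_{p\in P}\psi_p$ one has $I(P,\T)\lesim\delta^{-2}\int FG$. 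Let $\eta$ be a smooth cutoff equal to $1$ on $B(0,(S\delta)^{-1})$ and supported in $B(0,2(S\delta)^{-1})$, and decompose $F=F_1+F_2$ with $F_1=F*\check\eta$. Since $\check\eta$ has unit integral and is concentrated on scale $S\delta$, a direct computation gives $\psi_t*\check\eta\approx S^{-1}\psi_{t^S}$, so $F_1\approx S^{-1}\sum_t\psi_{t^S}$ and $\int F_1 G\lesim\delta^2 S^{-1} I(P,\T^S)\le\delta^2 S^{-1} I(P^S,\T^S)$; dividing by $\delta^2$ yields the $S^{-1}I(P^S,\T^S)$ term in \eqref{eqn21}.

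For the remaining high-frequency piece $\int F_2 G$, Cauchy--Schwarz gives $\int F_2 G\le\|F_2\|_2\|G\|_2$. A standard overlap computation using the $K_\beta$ hypothesis yields $\|G\|_2^2\lesim K_\beta\delta^2|P|$. What remains, and amounts to the main estimate, is $\|F_2\|_2^2\lesim SK_\alpha\delta|\T|$. I achieve this by an angular Littlewood--Paley decomposition: partition the direction space into windows $\Theta_i$ of size $S\delta$, let $\T_i\subset\T$ consist of tubes with direction in $\Theta_i$, and set $F_i=\sum_{t\in\T_i}\psi_t$. For $|\xi|\ge(S\delta)^{-1}$, the condition $\xi\in R_t^*$ forces $\theta_t$ to lie within $1/|\xi|\le S\delta$ of $\xi^\perp$, so the restrictions $\hat F_i|_{\{|\xi|\ge(S\delta)^{-1}\}}$ have essentially pairwise disjoint supports, giving $\|F_2\|_2^2\lesim\sum_i\|F_i\|_2^2$.

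The genuine work, and the main obstacle, is the $L^\infty$ bound $\|F_i\|_\infty\lesim SK_\alpha$, which together with the trivial $\|F_i\|_1\lesim|\T_i|\delta$ delivers $\|F_i\|_2^2\le\|F_i\|_\infty\|F_i\|_1\lesim SK_\alpha\delta|\T_i|$. To prove it, I further subdivide $\Theta_i$ into $S$ sub-windows $\Theta_{i,j}$ of size $\delta$. If two tubes $t,t'\in\T$ have directions in a common $\Theta_{i,j}$ and both contain a point $x$ (in the interior of each, with the finitely many endpoint-type configurations handled separately), then $\angle(t,t')\le\delta$ and a direct geometric computation shows that their center lines remain within $O(\delta)$ of one another along the full unit length, forcing $|t\cap t'|\gesim\delta\ge|t|/2$; the $K_\alpha$ hypothesis therefore caps the tubes in $\T$ through $x$ with direction in any one $\Theta_{i,j}$ by $O(K_\alpha)$, and summing over the $S$ sub-windows yields $\#\{t\in\T_i:x\in t\}\lesim SK_\alpha$. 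Summing the resulting $\|F_i\|_2^2\lesim SK_\alpha\delta|\T_i|$ over $i$ gives $\|F_2\|_2^2\lesim SK_\alpha\delta|\T|$, and combining with the previous estimates produces
\[
\delta^{-2}\|F_2\|_2\|G\|_2\lesim S^{1/2}D^{1/2}K_\alpha^{1/2}K_\beta^{1/2}|P|^{1/2}|\T|^{1/2},
\]
which is precisely the first term in \eqref{eqn21}.
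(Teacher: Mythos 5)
Your plan is a genuinely different route from the paper's. The paper does not reprove the Fourier estimate at all: it quotes the $K_\alpha=K_\beta=1$ case from \cite{bradshaw2020incidence} as a black box and reduces the general case to it by partitioning $P$ into $K_\beta$ families of disjoint balls and $\T$ into $K_\alpha$ families of essentially distinct tubes (a greedy/Brooks-type coloring), then summing with Cauchy--Schwarz. You instead rebuild the Guth--Solomon--Wang-type argument from scratch with the multiplicities $K_\alpha,K_\beta$ carried inside the $L^\infty$ and $L^2$ bounds. Most of your outline is sound: the split at frequency $(S\delta)^{-1}$, the identification of the low-frequency term with $S^{-1}I(P^S,\T^S)$ (modulo the routine Schwartz-tail bookkeeping --- $\psi_t*\check\eta$ is only dominated by $S^{-1}$ times a rapidly decaying weight adapted to $t^S$, so pairs at distance $\gg S\delta$ have to be absorbed, e.g.\ into the first term using the trivial bounds $|P|\lesim K_\beta\delta^{-2}$, $|\T|\lesim K_\alpha\delta^{-2}$), the bounded overlap of the angular pieces at high frequency, and $\|G\|_2^2\lesim K_\beta\delta^2|P|$.

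However, the step you yourself call the genuine work rests on a false claim. It is not true that two tubes $t,t'\in\T$ sharing a point and satisfying $\angle(t,t')\le\delta$ must have $|t\cap t'|\gesim\delta\ge|t|/2$: take $t=[0,1]\times[0,\delta]$ and $t'=[0,1]\times[0.9\delta,\,1.9\delta]$. Both lie in $[0,1]^2$, they are parallel, and they share the strip $[0,1]\times[0.9\delta,\delta]$ (so they have a positive-area set of common points), yet $|t\cap t'|=0.1\delta<|t|/2$. Axes staying within $O(\delta)$ of each other does not force overlap $\ge|t|/2$; only closeness within a small fraction of $\delta$ does. So the hypothesis, which only bounds the degree of the graph whose edges are pairs with overlap $\ge|t|/2$, does not by itself cap the number of tubes of one $\delta$-direction window through a point by $K_\alpha$, and your derivation of $\|F_i\|_\infty\lesim SK_\alpha$ breaks at this point. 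The bound itself is true and can be repaired as follows: parametrize a tube through $x$ with direction in a fixed $\delta$-window by the signed lateral offset $c\in[-\delta/2,\delta/2]$ of its axis at $x$ and by its direction $\theta$. If two such tubes satisfy $|c_1-c_2|\le\delta/16$ and $|\theta_1-\theta_2|\le\delta/16$, a short computation (using that both are unit-length tubes inside $[0,1]^2$, hence their longitudinal ranges overlap in length $\ge 2-\sqrt2$, while their axes stay within $\delta/8$) gives $|t\cap t'|\ge|t|/2$. Pigeonholing the parameter rectangle $[-\delta/2,\delta/2]\times\Theta_{i,j}$ into a bounded grid therefore shows that any family of tubes through $x$ with directions in one $\delta$-window and pairwise overlap $<|t|/2$ has $O(1)$ members; taking a maximal such family and noting that every other tube through $x$ in that window overlaps one of its members by $\ge|t|/2$, the hypothesis gives $\#\{t\in\T:\ x\in t,\ \theta_t\in\Theta_{i,j}\}\lesim K_\alpha$, hence $\lesim SK_\alpha$ per $S\delta$-window, restoring $\|F_i\|_\infty\lesim SK_\alpha$ and with it the rest of your argument.
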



\begin{proof}
If $K_\alpha = K_\beta = 1$, then the $\delta$-tubes in $\T$ are essentially distinct and the $\delta$-balls in $P$ are pairwise non-intersecting. Thus, we can directly apply \cite[Proposition 2.1]{bradshaw2020incidence} with choice of parameters $\alpha = \frac{\eps^2}{40}$ and weight function $w \equiv 1$.

Now, we will tackle the general case. To do so, we will partition $P$ into $K_\beta$ groups $P_1, P_2, \cdots, P_{K_\beta}$ such that all the balls in $P_i$ are disjoint. Consider a graph on the set of $\delta$-balls of $P$, with two balls connected by an edge if they intersect. Then each ball has maximum degree $K_\beta - 1$ by assumption. To construct the desired partition of $P$, we employ the following well-known lemma from graph theory, which follows from for example Brook's theorem in \cite{brooksthm}:
\begin{lemma}\label{graph}
Any graph with maximum degree $n$ admits a coloring of the vertices with $n+1$ colors such that no two adjacent vertices share the same color.
\end{lemma}
In other words, we may partition of $P$ into $K_\beta$ many sets $P_1, P_2, \dots, P_{K_\beta}$, such that any two intersecting $\delta$-balls in $P$ must belong in different sets of the partition, so the $\delta$-balls in each $P_i$ are disjoint.

Similarly, we may partition $\T$ into $K_\alpha$ groups $\T_1, \T_2, \cdots, \T_{K_\alpha}$ such that the $\delta$-tubes in each $\T_i$ are essentially distinct. Finally, by applying our $K_\alpha = K_\beta = 1$ incidence result to each $P_i$ and $\T_j$, we have
\begin{align*}
    I(P, \T) &\le \sum_{i=1}^{K_\alpha} \sum_{j=1}^{K_\beta} I(P_i, \T_j) \\
        &\lesim_\eps \sum_{i=1}^{K_\alpha} \sum_{j=1}^{K_\beta} \left( S^{1/2} \cdot D^{1/2} |P_i|^{1/2} |\T_j|^{1/2} + S^{-1+\eps/2} \cdot I(P_i^S, \T_j^S) \right) \\
        &= \left( S^{1/2} \cdot D^{1/2} \sum_{i=1}^{K_\alpha} |P_i|^{1/2} \cdot \sum_{j=1}^{K_\beta} |\T_j|^{1/2} \right) + S^{-1+\eps/2} \cdot I(P^S, \T^S) \\
        &\le S^{1/2} \cdot D^{1/2} (K_\alpha^{1/2} |P|^{1/2}) (K_\beta^{1/2}  |\T|^{1/2}) + S^{-1+\eps/2} \cdot I(P^S, \T^S).
\end{align*}
The last line followed from Cauchy-Schwarz and $\sum_{i=1}^{K_\beta} |P_i| = |P|$, $\sum_{j=1}^{K_\alpha} |\T_j| = |\T|$. This proves the desired result \eqref{eqn21} for general $K_\alpha$, $K_\beta \ge 1$.
\end{proof}

\subsection{The upper bound for $I(P, \T)$}
Proposition \ref{twopointone} hints at an inductive approach to upper bound $I(P, \T)$. If the first term in \eqref{eqn21} dominates, we get our desired upper bound. If the second term dominates, then we need to estimate $I(P^S, \T^S)$, where $P^S$ is formed by thickening the $\delta$-balls in $P$ to $S\delta$-balls, and likewise $\T^S$ is formed by thickening the $\delta$-tubes in $\T$ to $S\delta$-tubes. (Here, $S = D^{\eps/20}$.) We thus obtain an incidence problem at scale $S\delta$, so we can apply induction. The key idea is that if $P$ is a $(\delta, \beta, K_\beta)$-set of balls, then $P^S$ is a $(S\delta, \beta, S^\beta K_\beta)$-set, and similarly for tubes $\T^S$. We now prove Theorem \ref{mainbigbig}.

\begin{theorem}\label{bigbig}
Fix $\eps > 0$, and let $c^{-1} = \max(\alpha + \beta - 1, 2)$. There exists a $C_\eps > 0$ such that the following holds: for any $(\delta, \beta, K_\beta)$-set of balls $P$ and $(\delta, \alpha, K_\alpha)$-set of tubes $\T$ contained in $[0, 1]^2$, we have the following incidence bound:
\begin{equation}\label{eqn:bigbig}
    I(P, \T) \le C_\eps \delta^{-c-\eps} (K_\alpha K_\beta)^c |P|^{1-c} |\T|^{1-c}.
\end{equation}
\end{theorem}

\emph{Remark.} If $\alpha = \beta = 2$, which corresponds to the case where there are no constraints on the distribution of $\delta$-tubes or $\delta$-balls in $[0, 1]^2$, the result becomes $I(P, \T) \le C_\eps \delta^{-(1/3 + \eps)} |P|^{2/3} |\T|^{2/3}$, which (up to a $C_\eps \delta^{-\eps}$ factor) recovers a result in \cite{fassler2020planar}.

\begin{proof}
Throughout the proof, we let $D := \delta^{-1}$.

First, we can assume $\eps < \frac{1}{2}$. The proof will be by induction on $n = \lfloor -\log_2 \delta \rfloor$. Let $C_1 (\eps) \ge 1$ be a constant to be chosen later, and $N = N(\eps)$ such that $2C_1 \cdot 2^{-N\eps^2/40} < 1$. Finally, we will choose $C_\eps = \max(2C_1, 2^{3N})$.

The base case will be $\delta \ge 2^{-N}$. Then since $P$ is a $(\delta, \beta, K_\beta)$-set, we have $|P| \le K_\beta D^\beta$. Similarly, $|\T| \le K_\alpha D^\alpha$. Finally, since $\alpha + \beta - 1 \le 3$ and $D \le 2^N$, we get
\begin{equation*}
    I(P, \T) \le |P| |\T| \le D^{c(\alpha + \beta)} (K_\alpha K_\beta)^c |P|^{1-c} |\T|^{1-c} \le 2^{3N} \cdot D^c (K_\alpha K_\beta)^c |P|^{1-c} |\T|^{1-c}
\end{equation*}
This gives the desired bound \eqref{eqn:bigbig} since $2^{3N} \le C_\eps$.

For the inductive step, assume the result is true for $\delta > 2^{-n}$, for some $n \ge N$. We will show the result for $\delta \in (2^{-(n+1)}, 2^{-n}]$.

We first take care of the case when $|P| |\T|$ is small. If $|P| |\T| \le D \cdot K_\alpha K_\beta$, then $I(P, \T) \le |P| |\T| \le D^c (K_\alpha K_\beta)^c |P|^{1-c} |\T|^{1-c}$.

Thus, we may assume $|P| |\T| \ge D \cdot K_\alpha K_\beta$. 
Because $P$ is a $(\delta, \beta, K_\beta)$-set, each $p \in P$ intersects $\le 9K_\beta$ many $\delta$-balls of $P$ (see brief remarks after Definition \ref{def:ball_dim}). Likewise, since $\T$ is a $(\delta, \alpha, K_\alpha)$-set, each $t \in \T$ is essentially identical with $\lesim K_\alpha$ many $\delta$-tubes of $\T$. Thus, we may apply Proposition \ref{twopointone} to obtain, for some constant $C_1 = C_1 (\eps) > 0$ and $S = D^{\eps/20}$,
\begin{equation}\label{twoterms}
    I(P, \T) \le C_1 S^{1/2} \cdot (K_\alpha K_\beta)^{1/2} D^{1/2} |P|^{1/2} |\T|^{1/2} + C_1 S^{-1+\eps/2} I(P^S, \T^S).
\end{equation}
To prove \eqref{eqn:bigbig}, we will show each term is bounded above by $\frac{1}{2} C_\eps D^{c+\eps} (K_\alpha K_\beta)^c |P|^{1-c} |\T|^{1-c}$.

This is clear for the first term, since $c \le \frac{1}{2}, |P| |\T| \ge D \cdot K_\alpha K_\beta$, and $C_\eps \ge 2C_1$.

For the second term, observe that $P^S$ is a $(S\delta, \beta, S^\beta K_\beta)$-set of balls and $\T^S$ is a $(S\delta, \alpha, S^\alpha K_\alpha)$-set of tubes. Thus, by the inductive hypothesis and $c(\alpha+\beta-1) \le 1$, we have
\begin{align*}
    I(P^S, \T^S) &\le C_\eps (S\delta)^{-c - \eps} (K_\alpha K_\beta \cdot S^{\alpha+\beta})^c |P|^{1-c} |\T|^{1-c} \\
            &\le C_\eps S^{1-\eps} \cdot \delta^{-c - \eps} (K_\alpha K_\beta)^c |P|^{1-c} |\T|^{1-c}.
\end{align*}
Recall that $\delta \le 2^{-N}$, and by definition of $N$ and $S$, we get $2C_1 S^{-\eps/2} = 2C_1 D^{-\eps^2/40} < 1$.
Thus, we get $C_1 S^{-1+\eps/2} I(P^S, \T^S) \le \frac{1}{2} C_\eps D^{c+\eps} (K_\alpha K_\beta)^c |P|^{1-c} |\T|^{1-c}$. We have showed that each term of \eqref{twoterms} is bounded above by $\frac{1}{2} C_\eps D^{c+\eps} (K_\alpha K_\beta)^c |P|^{1-c} |\T|^{1-c}$, completing the inductive step and thus the proof of Theorem \ref{bigbig}.

\end{proof}

\section{Proof of Theorems \ref{main}, \ref{thm:furstenberg}, and Corollary \ref{cor:sum-product}}
We restate Theorem \ref{main} here:
\begin{theorem}
    Suppose $\alpha, \beta$ satisfy $0 \le \alpha, \beta \le 2$, and let $K_\alpha, K_\beta \ge 1$. For every $\eps > 0$, there exists $C = C_\eps K_\alpha K_\beta$ with the following property: for every $(\delta, \beta, K_\beta)$-set of balls $P$ and $(\delta, \alpha, K_\alpha)$-set of tubes $\T$, the following bound holds:
    \begin{equation*}
        I(P, \T) \le C \cdot \delta^{-f(\alpha, \beta) - \eps},
    \end{equation*}
    where $f(\alpha, \beta)$ is defined as in Figure \ref{fig:answers}. These bounds are sharp up to $C \cdot \delta^{-\eps}$.
\end{theorem}

\begin{proof}
    The sharpness of these bounds was proved in Section 2, with the constructed examples. We turn to showing the desired upper bounds. In this proof, let $D = \delta^{-1}$.
    
    First, we have $|P| \lesim K_\beta \cdot D^\beta$ and $|\T| \lesim K_\alpha \cdot D^\alpha$ by dimension property (take $w = 2$). We will split into cases.
    
    \begin{itemize}
        \item If $1 \ge \alpha \ge \beta$ or $\alpha \ge 1 \ge \beta \ge \alpha-1$, then we use Theorem \ref{tubesdom} to get
        \begin{equation*}
            I(P, \T)^{a + \beta} \lesim K_\beta^a K_\alpha^\beta D^{a \beta + \eps} D^{\beta^2} D^{a \alpha}.
        \end{equation*}
        
        \item If $1 \ge \beta \ge \alpha$ or $\beta \ge 1 \ge \alpha \ge \beta-1$, then we use Theorem \ref{ballsdom} to get
        \begin{equation*}
            I(P, \T)^{\alpha + b} \lesim K_\beta^\alpha K_\alpha^b D^{\alpha b + \eps} D^{b \beta} D^{\alpha^2}.
        \end{equation*}
        
        \item If $\alpha \ge 1$ and $\beta \ge 1$, then we use Theorem \ref{bigbig} to get
        \begin{equation*}
            I(P, \T) \lesim (K_\alpha K_\beta)^c D^{c+\eps} D^{\alpha(1-c)} D^{\beta(1-c)},
        \end{equation*}
        where $c^{-1} = \max(\alpha + \beta - 1, 2)$.

        \item Suppose $\beta \ge \alpha + 1$. By the short remark after Definition \ref{def:ball_dim}, each $\delta$-ball in $P$ intersects $\le 9K_\beta$ other $\delta$-balls in $P$. Thus, using Lemma \ref{graph}, we can partition $P$ into $P_1, P_2, \cdots, P_{9K_\beta}$ such that the $\delta$-balls in each $P_i$ are disjoint. Using this disjointness property, each $\delta$-tube in $\T$ can only intersect $\lesim D$ many $\delta$-balls of any $P_i$. Thus, we get $I(P_i, \T) \lesim |\T| \cdot D \lesim K_\alpha \cdot D^{\alpha+1}$ for each $1 \le i \le 9K_\beta$, so $I(P, \T) = \sum_{i=1}^{9K_\beta} I(P_i, \T) \lesim K_\alpha K_\beta D^{\alpha+1}$.
        
        \item If $\alpha \ge \beta + 1$, then using a similar partitioning argument as in the previous bullet point, we get $I(P, \T) \lesim K_\alpha K_\beta D^{\beta+1}$.
    \end{itemize}
    
    Combining these results proves Theorem \ref{main}.
\end{proof}

Now we move to the proof of Theorem \ref{thm:furstenberg} and Corollary \ref{cor:sum-product}. We will deduce them from the following incidence estimate:

\begin{theorem}\label{thm:general_furst}
    Fix $\eps > 0$. Suppose $\T$ is an $(\delta, \alpha, K_\alpha)$-set of tubes contained in $[0, 1]^2$. For every $t \in \T$, let $P_t$ be a $(\delta, \beta, K_\beta)$-set of balls contained in $[0, 1]^2$ such that $p \cap t \neq \emptyset$ for each $p \in P_t$. If $c = \max(\alpha + \beta, 2)^{-1}$ and $P = \bigcup_{t \in \T} P_t$, then
    \begin{equation}
        \sum_{t \in \T} |P_t| \lesim_\eps D^{c+\eps} K_\beta^c K_\alpha^c |P|^{1-c} |\T|^{1-c}. \label{eqn:general_furst}
    \end{equation}
\end{theorem}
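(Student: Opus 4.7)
The plan is to prove Theorem \ref{thm:general_furst} by adapting the induction-on-scale strategy used for Theorem \ref{bigbig}, applied to the weighted incidence count $X := \sum_{t \in \T} |P_t|$ in place of the unweighted count $I(P, \T)$.

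The first and most substantive step is to establish a weighted analog of Proposition \ref{twopointone}: for $S = D^{\eps/20}$,
\[
    X \lesim S \cdot D^{1/2} K_\alpha^{1/2} K_\beta^{1/2} |P|^{1/2} |\T|^{1/2} + S^{-1} X^S,
\]
where $X^S := \sum_{\tilde t \in \T^S} |\tilde P_{\tilde t}^S|$ is the analogous weighted count at the thickened scale $S\delta$. Here $\T^S$ is a set of essentially distinct $S\delta$-tubes---a $(S\delta, \alpha, S^\alpha K_\alpha)$-set---and each $\tilde P_{\tilde t}^S$ is a $(S\delta, \beta, S^\beta K_\beta)$-set associated to $\tilde t$. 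The proof of this analog would start from the hypothesis that each $P_t$ (not necessarily $P$) is a $(\delta, \beta, K_\beta)$-set: I would apply Lemma \ref{graph} to each $P_t$ individually, partitioning it into $K_\beta + 1$ color classes of pairwise essentially disjoint balls, then apply the $K_\alpha = K_\beta = 1$ version of Proposition \ref{twopointone} from \cite{bradshaw2020incidence} and sum over color classes.

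With this inequality in hand, the second step is to induct on $n = \lfloor -\log_2 \delta \rfloor$, mimicking the proof of Theorem \ref{bigbig}. The trivial range $|P||\T| \le D \cdot K_\alpha K_\beta$ is handled directly by $X \le |P||\T|$, which satisfies the target bound because $c \le 1/2$. In the main range, the first term above is absorbed into $\tfrac12 C_\eps D^{c+\eps} K_\alpha^c K_\beta^c |P|^{1-c} |\T|^{1-c}$ for large $C_\eps$. For the second term, applying the inductive hypothesis at scale $S\delta$ with updated constants $K_\alpha \mapsto S^\alpha K_\alpha$ and $K_\beta \mapsto S^\beta K_\beta$, together with $|P^S| \le |P|$ and $|\T^S| \le |\T|$, yields the full $S$-exponent $(\alpha+\beta-1)c - \eps$. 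The identity $(\alpha+\beta-1)c = 1 - c$, valid for $c = 1/(\alpha+\beta)$ when $\alpha+\beta \ge 2$ (and trivial when $\alpha+\beta \le 2$ with $c = 1/2$), makes the combined $S$-dependence collapse to $S^{-c-\eps}$, which is less than $\tfrac12$ for $\delta$ small enough.

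The main obstacle is the proper definition of $\tilde P_{\tilde t}^S$ in the weighted analog of Proposition \ref{twopointone}. When several original tubes $t \in \T$ thicken to a single $\tilde t \in \T^S$, naively setting $\tilde P_{\tilde t}^S := \bigcup_{t \to \tilde t} P_t^S$ inflates the dimensional constant from $S^\beta K_\beta$ up to $S^{\alpha+\beta} K_\alpha K_\beta$, which would break the induction (the $S$-exponent becomes positive instead of negative). Avoiding this requires more than a surface-level modification of Proposition \ref{twopointone}'s coloring argument: one must work inside its Fourier-analytic proof, where the wave-packet or frequency-localization step is run with bump functions localized to individual $P_t$ rather than to the aggregated union. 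Verifying that this localized variant still produces the decomposition into a main term (with the correct $K_\alpha^{1/2} K_\beta^{1/2}$ scaling) and a high-frequency residual (controlled by $X^S$ with the correct $S^\beta K_\beta$ dimensional constant) is the most delicate part of the argument.
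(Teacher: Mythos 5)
Your proposal diverges from the paper's argument, and the divergence is exactly where the gap lies. The paper does not re-run the induction on scale for the weighted count $\sum_{t}|P_t|$; it keeps Theorem \ref{bigbig} as a black box and instead regularizes the ball set: it locates the over-concentrated dyadic squares (those containing $\ge K_\beta(w/\delta)^{\beta+1}$ balls), replaces the balls inside each such square by $K_\beta$ superposed copies of a Cantor-product configuration $\calP_w$, and proves two lemmas — Lemma \ref{lem:furst_technical}, which says the replacement does not decrease the per-tube counts (because each $P_t$ is a $(\delta,\beta,K_\beta)$-set, so a tube could not have had many more than $K_\beta\max(1,(d/\delta)^\beta)$ balls of $P_t$ in that square anyway), and Lemma \ref{lem:easier_dimcheck}, which certifies the resulting set $P'$ is a $(\delta,\beta+1,O(K_\beta))$-set with $|P'|\lesim|P|$. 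Then $\sum_t |P_t|\lesim I(P',\T)$ and Theorem \ref{bigbig} applied with ball dimension $\beta+1$ yields precisely $c^{-1}=\max(\alpha+\beta,2)$. This sidesteps the whole issue of maintaining the per-tube structure under thickening.

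Your route, by contrast, stands or falls on the ``weighted analog of Proposition \ref{twopointone},'' and you do not prove it — you only name the obstruction (when $\sim S^\alpha K_\alpha$ tubes merge into one $S\delta$-tube, the natural candidate $\bigcup_{t\to\tilde t}P_t^S$ is only a $(S\delta,\beta,S^{\alpha+\beta}K_\alpha K_\beta)$-set, which destroys the $S$-gain) and assert that it should be fixable by localizing the Fourier-analytic proof to individual $P_t$. That is a genuine gap, not a detail: the $L^2$/positivity argument behind Proposition 2.1 of \cite{guth} and \cite{bradshaw2020incidence} counts \emph{all} incidences between the tube set and the union of the balls, and there is no evident mechanism within it for discarding the stray incidences between $t$ and $P\setminus P_t$, which is the entire difficulty here ($P$ itself carries no dimension condition better than $\beta'=2$). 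Nor do you supply, for the inductive step, a construction assigning to each thickened tube a single $(S\delta,\beta,S^\beta K_\beta)$-set whose cardinality still dominates the contributions of all the merged tubes; choosing one merged $P_t$ loses the count, taking the union loses the constant. Unless you can actually produce such a weighted single-scale inequality — and the paper's authors evidently found it easier to change the ball set than to do so — the induction does not close, and the argument as proposed is incomplete.
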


\textit{Remark.} The LHS of \eqref{eqn:general_furst} is less than $I(P, \T)$: we only count incidences between $t$ and $p \in P_t$, and discard ``stray'' incidences between $t$ and $p \in P \setminus P_t$.

\subsection{Sharpness of Theorem \ref{thm:furstenberg} and idea for Theorem \ref{thm:general_furst}} We first establish the sharpness part of Theorem \ref{thm:furstenberg}. Let $\calC$ be a Cantor set in $[0, 1]$ with Hausdorff dimension $\beta$, and consider the product set $A = \calC \times [0, 1] \subset \R^2$. Then $A$ is a $(\beta, v)$-Furstenberg set for any $0 \le v \le 2$, since lines with angle $\le \frac{1}{100}$ from vertical intersect $A$ in an affine copy of $\calC$, which has dimension $\beta$. Also, $\dim_H (A) = \beta + 1$.

Moving onto the proof of Theorem \ref{thm:general_furst}, it would be nice to assume the set $P$ is a $(\delta, \beta', K_\beta)$-set for some $\beta' < 2$, so that an application of Theorem \ref{bigbig} would be stronger. Unfortunately, a priori $P$ may contain some over-concentrated pockets. To remedy this, we can replace these over-concentrated pockets with a discretized copy of $A$ (from the sharpness of Theorem \ref{thm:furstenberg}). Then we decrease the number of balls in $P$, but increase the number of $\delta$-balls of $P$ intersecting a given tube $t \in \T$. In the end, we obtain a $(\delta, \beta+1, K_\beta)$-set $P'$ with $|P'| \lesim |P|$ but $I(P', \T) \gesim \sum_{t \in \T} |P_t|$, and then we apply Theorem \ref{bigbig} on $P'$ and $\T$ to finish.

\subsection{Proving Theorem \ref{thm:general_furst} with extra assumptions}
It is convenient to make some assumptions about our setup. Fortunately, these extra assumptions are harmless, as we will show in Subsection \ref{subsec:reductions}.
\begin{theorem}\label{thm:general_furst'}
    Fix $\eps > 0$. Suppose $\T$ is an $(\delta, \alpha, K_\alpha)$-set of tubes contained in $[0, 1]^2$. For every $t \in \T$, let $P_t$ be a $(\delta, \beta, K_\beta)$-set of balls contained in $[0, 1]^2$ such that $p \cap t \neq \emptyset$ for each $p \in P_t$. Let $P = \bigcup_{t \in \T} P_t$. Suppose we have the additional simplifying assumptions:
    \begin{enumerate}[(S1)]
        \item\label{S1} $\delta = 2^{-n}$ for some $n \ge 1$, and $K_\alpha, K_\beta \ge 1$ are integers.

        \item\label{S2} All the $\delta$-tubes of $\T$ have angle $[\frac{\pi}{4}, \frac{\pi}{4} + \frac{\pi}{100}]$ with the $y$-axis.

        \item\label{S3} All the $\delta$-balls in $P$ are centered in the lattice $(\delta (2\Z + 1))^2$.
    \end{enumerate}
    If $c = \max(\alpha + \beta, 2)^{-1}$, then
    \begin{equation}
        \sum_{t \in \T} |P_t| \lesim_\eps D^{c+\eps} K_\beta^c K_\alpha^c |P|^{1-c} |\T|^{1-c}. \label{eqn:general_furst'}
    \end{equation}
\end{theorem}

We prove Theorem \ref{thm:general_furst'} in the remainder of this subsection.
As stated in the last subsection, the main idea is to replace $P$ with a $(\delta, \beta+1, K_\beta)$-set $P'$ with $|P'| \lesim |P|$ but $I(P', \T) \gesim \sum_{t \in \T} |P_t|$. A priori, $P$ may contain some over-concentrated pockets, or balls $B_w$ that contain $> K_\beta \cdot (\frac{w}{\delta})^{\beta+1}$ many $\delta$-balls in $P$. We would like to locally replace the portion of $P$ in each over-concentrated pocket with a smaller set of $\delta$-balls (to be constructed later) with cardinality $\le K_\beta \cdot (\frac{w}{\delta})^{\beta+1}$; then the resulting set $P'$ will not have over-concentrated pockets and thus will be a $(\delta, \beta+1, K_\beta)$-set. Unfortunately, this argument does not work because some of the over-concentrated pockets may overlap. Instead, we will find a set of disjoint over-concentrated pockets such that if we fix them, then the new set $P'$ will be a $(\delta, \beta+1, CK_\beta)$-set for some absolute constant $C > 0$. The disjoint over-concentrated pockets will turn out to be dyadic squares, which we define next.
\begin{defn}
Fix $w = 2^{-n}$, $n \ge 0$. The dyadic squares $\calD_w$ are the squares of side length $w$ whose vertices are in the lattice $(w \Z)^2 \cap [0, 1]^2$. 
\end{defn}

We will also adopt the convenient shorthand:

\textbf{Notation.} For a set of $\delta$-balls $P$ and a subset $\cQ \subset \R^2$, let $P \cap \cQ := \{ p \in P \mid p \subset \cQ \}$.

The next well-known lemma roughly says that fixing the over-concentrated dyadic squares is sufficient to ensure $P'$ is a $(\delta, \beta+1, CK_\beta)$-set.

\begin{lemma}\label{lem:easier_dimcheck}
Let $\delta = 2^{-N}$ for some $N \ge 1$. Let $P$ be a set of $\delta$-balls contained in $[0,1]^2$ whose centers lie in $(\delta(2\Z + 1))^2$. Suppose for each $w = 2^{-n}$, $\delta < w < 1$, we have for all $\calQ \in \calD_w$,
\begin{equation}\label{dimcondtemp2}
    |P \cap \cQ| \le K \cdot \left( \frac{w}{\delta} \right)^\beta.
\end{equation}
Then $P$ is a $(\delta, \beta, 64K)$-set of balls.
\end{lemma}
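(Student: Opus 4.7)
The plan is to reduce the ball version of the dimensional condition to the dyadic-square hypothesis \eqref{dimcondtemp2} via two simple covering observations. The central (and only subtle) one is that the centering assumption on the \emph{odd} sublattice $(\delta(2\Z+1))^2$ places every $p\in P$ inside the unique $2\delta\times 2\delta$ axis-aligned box with corners on $(2\delta\Z)^2$ containing its center. Consequently, for every dyadic scale $w' = 2^{-n}$ with $2\delta \le w' \le 1$, each $p\in P$ is contained in a unique dyadic square $\calQ\in\calD_{w'}$: it cannot straddle a dyadic boundary.

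Given an arbitrary ball $B_w$ of radius $w \in [\delta, 1]$, I would pick $w'$ to be the smallest dyadic value with $w' \ge 2w$, or $w' = 1$ if $2w > 1$. This choice satisfies $2\delta \le w' \le 1$ and $w' \le 4w$, so the hypothesis \eqref{dimcondtemp2} is available at scale $w'$. The set $B_w \cap [0,1]^2$ meets at most $4$ squares of $\calD_{w'}$: when $w \le 1/2$, the $2w\times 2w$ axis-aligned bounding box has side $\le w'$ and straddles at most one vertical and one horizontal dyadic line, giving $2\cdot 2 = 4$ squares; when $w > 1/2$, there is only the one dyadic square $[0,1]^2$ at scale $w' = 1$. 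Since every $p \in P$ with $p\subset B_w$ lies in one of these $\le 4$ squares, summing \eqref{dimcondtemp2} yields
\[
\#\{p\in P : p\subset B_w\} \;\le\; 4K\Bigl(\frac{w'}{\delta}\Bigr)^\beta \;\le\; 4\cdot 4^\beta \cdot K\Bigl(\frac{w}{\delta}\Bigr)^\beta \;\le\; 64K\Bigl(\frac{w}{\delta}\Bigr)^\beta,
\]
where the final step uses $\beta\le 2$. This is precisely the $(\delta,\beta,64K)$-set bound sought.

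The only conceptual obstacle is the alignment observation: the odd sublattice $(\delta(2\Z+1))^2$ is genuinely used, because centers in $(\delta\Z)^2$ would allow a $\delta$-ball to straddle two neighbouring $2\delta$-cells and break dyadic containment at every coarser scale. Once that is in hand, the argument is just routine dyadic bookkeeping; no further obstacle arises, and the constant $64 = 4\cdot 4^{\,2}$ is easy to track.
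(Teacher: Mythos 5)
Your proof is correct and follows essentially the same route as the paper: choose a dyadic scale comparable to $w$ (between $2w$ and $4w$), cover $B_w$ by at most four dyadic squares at that scale, use the odd-lattice centering $(\delta(2\Z+1))^2$ to rule out balls straddling dyadic boundaries, and sum \eqref{dimcondtemp2} over the four squares to get $4\cdot 4^\beta K \le 64K$. The only cosmetic difference is that the paper selects the four squares sharing the lattice vertex nearest the ball's center, while you count the squares met by the bounding box (and explicitly cap the scale at $1$), which is the same idea.
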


\begin{proof}
Pick a $r$-ball $B_r$ with $r \in [\delta, 1]$; we want to show $|P \cap B_r| \le 64K \cdot (\frac{r}{\delta})^\beta$.

Suppose $r \ge \frac{1}{4}$. Let $D_1, D_2, D_3, D_4$ be the four dyadic squares in $\cD_{1/2}$; their union is $[0, 1]^2$. Furthermore, for each $p \in P \cap [0, 1]^2$, we know that the center of $p$ lies in $(\delta(2\Z + 1))^2$, so $p$ must lie inside some $D_i$. By applying \eqref{dimcondtemp2} to each $D_i$, we have (since $r \ge \frac{1}{4}$ and $\beta \le 2$):
\begin{equation*}
    |P \cap B_r| \le |P| = \sum_{i=1}^4 |P \cap D_i| \le 4K \cdot \left(\frac{1}{2\delta} \right)^\beta \le 64K \cdot \left(\frac{r}{\delta} \right)^\beta.
\end{equation*}
Suppose $\delta \le r < \frac{1}{4}$. Let $w = 2^{-n}$ satisfy $r < w \le 2r$. Let $B_w$ be the $w$-ball concentric with $B_r$, and let $A$ be the point in $(2w \Z)^2$ closest to the center of $B_w$. There are (at most) four dyadic squares $D_1, D_2, D_3, D_4$ in $\calD_{2w}$ with $A$ as a vertex. Using geometric intuition, we see the union $\cup_{i=1}^4 D_i$ contains $B_w \cap [0, 1]^2$, and hence $B_r \cap [0, 1]^2$. Furthermore, since $\frac{w}{\delta} = 2^{N-n}$ is an even integer, a $\delta$-ball $p \in P$ that lies inside $\cup_{i=1}^4 D_i$ must lie inside some $D_i$. By applying \eqref{dimcondtemp2} to each $D_i$ with side length $2w \le 4r < 1$, we have (since $\beta \le 2$):
\begin{equation*}
    |P \cap B_r| \le \sum_{i=1}^4 |P \cap D_i| \le 4K \cdot \left(\frac{2w}{\delta} \right)^\beta \le 4K \cdot \left(\frac{4r}{\delta} \right)^\beta \le 64K \cdot \left( \frac{r}{\delta} \right)^\beta.
\end{equation*}
Hence, $P$ is a $(\delta, \beta, 64K)$-set by Definition \ref{def:ball_dim}.
\end{proof}

Thanks to Lemma \ref{lem:easier_dimcheck}, we shall only look at the set $\cR$ of over-concentrated dyadic squares for $P$, or squares $\cQ \in \cup_{\delta < w < 1 \text{ dyadic}} \cD_w$ satisfying $|P \cap \cQ| \ge K_\beta \cdot (\frac{w}{\delta})^\beta$. The squares in $\cR$ are not necessarily disjoint because a larger dyadic square can contain a smaller dyadic square. However, if we partially order the set of dyadic squares by inclusion, then the set $\cR'$ of maximal elements in $\cR$ with respect to inclusion will be pairwise disjoint. Thus, the final question remaining is: how to fix the over-concentrated pockets in $\cR'$?


Let $w \in (\delta, 1)$ be a dyadic number. We shall construct a set of $\delta$-balls $\calP_w$ contained in $[0, w]^2$ with the following three properties (where the implicit constants are absolute):
\begin{enumerate}[(P1)]

    \item\label{E2} $|\cP_w \cap Q| := |\{ p \in \calP_w \mid p \subset Q \}| \lesim \left( \frac{d}{\delta} \right)^{\beta+1}$ for any $d \times d$ square $Q$ with sides parallel to the coordinate axes and $d \in [\delta, 1]$.

    \item\label{E3} Let $t$ be a $\delta$-tube that forms angle $\frac{\pi}{4} \pm \frac{\pi}{100}$ with the $y$-axis. Suppose $P_t$ is a $(\delta, \beta, K_\beta)$-set satisfying $p \cap t \neq \emptyset$ for all $p \in P_t$, and each $\delta$-ball in $P_t$ is centered in the lattice $(\delta (2\Z + 1))^2$. Then we have $K_\beta |\{ p \in \calP_w \mid p \cap t \neq \emptyset \}| \gesim |P_t \cap [0, w]^2|$.
\end{enumerate}

We defer the construction of $\cP_w$ to Section \ref{subsec:construct Pw}. Now, we will formalize our previous ideas and prove Theorem \ref{thm:general_furst'} assuming the existence of $\calP_w$.

\begin{proof}[Proof of Theorem \ref{thm:general_furst'}]

Recall that for $w = 2^{-n}$, we define $\calD_w$ to be the set of dyadic squares of side length $w$ contained in $[0, 1]^2$. 
Let $\calR$ be the set of squares in $\bigcup_{\delta < w < 1 \text{ dyadic}} \calD_w$ that contain $\ge K_\beta \cdot \left( \frac{w}{\delta} \right)^{\beta+1}$ many $\delta$-balls in $Q$. We call $\calR$ the ``over-concentrated'' squares. Let $\calR'$ be the maximal elements of $\calR$, i.e. the squares $\calQ \in \calR$ such that no square of $\calR$ properly contains $\calQ$. Since two elements of $\calR$ are either disjoint or one lies inside the other, we see that the elements of $\calR'$ are disjoint.

As a notational convenience in this proof, for any subset $A \subset \R^2$, we let $P \cap A := \{ p \in P \mid p \subset A \}$ to be the set of $\delta$-balls in $P$ that lie in $\cQ$. Similarly define $P \setminus A := \{ p \in P \mid p \not\subset A \}$. We will also define the set $\cup \cR' \subset \R^2$ to be the union of the squares in $\cR'$.

Using this notation, we observe an important fact (already noticed in the proof of Lemma \ref{lem:easier_dimcheck}): Since the $\delta$-balls in $P$ are centered in $(\delta(2\Z+1))^2$ (by \ref{S3}), and since the dyadic squares in $\cR'$ have side length being multiples of $2\delta$, we have that any $\delta$-ball in $P$ is either contained in some $\cQ \in \cR'$ or contained in $[0, 1]^2 \setminus \cup \cR'$. This fact will be used throughout the argument without further mention, but let us mention a particular example: we have $P = (P \setminus \cup \cR') \cup \bigsqcup_{\cQ \in \cR'} (P \cap \cQ)$.

We now construct a new set of balls $P'$ that has fewer $\delta$-balls than $P$ in the over-concentrated squares $\cR$ (and equals $P$ outside the set of over-concentrated squares), yet $I(P', \T) \gesim \sum_{p \in P} |P_t|$. For each $\calQ \in \calR'$ with side length $w$, we let $P'(\calQ)$ be a superposition of $K_\beta$ copies of $\calP_w$ placed inside $\calQ$.
Finally, define $P' := (P \setminus \cup \cR') \cup \bigsqcup_{\calQ \in \calR'} P'(\calQ)$, which replaces the $\delta$-balls in $P \cap \calQ$ with $P'(\calQ)$ for each $\calQ \in \cR'$. Then for each $t \in \T$, we have
\begin{align*}
    |\{ p \in P' : p \cap t \neq \emptyset \}| &= |\{ p \in P' \setminus \cup \cR' : p \cap t \neq \emptyset \}| + \sum_{\cQ \in \cR'} |\{ p \in P'(\cQ) \mid p \cap t \neq \emptyset \}| \\
    &\gesim |P_t \setminus \cup \cR'| + \sum_{\cQ \in \cR'} |P_t \cap \cQ| \\
    &= |P_t|.
\end{align*}
(To get from the first to the second line, we used $P \setminus \cup \cR' = P' \setminus \cup \cR'$ and $p \in P_t \implies p \cap t \neq \emptyset$ to lower bound the first term, and property \ref{E3} with our assumptions \ref{S2}, \ref{S3} to lower bound the summation term.) Summing over all $t \in \T$ gives
\begin{equation}\label{eqn:increase incidences 2}
    I(P', \T) := \sum_{t \in \T} |\{ p \in P' : p \cap t \neq \emptyset \}| \gesim \sum_{t \in \T} |P_t|.
\end{equation}
Similarly, by \ref{E2} and the definition of $\calR'$, we have $|P'(\calQ)| \lesim K_\beta \cdot \left( \frac{w}{\delta} \right)^{\beta+1} < |P \cap \cQ|$ for each $\calQ \in \calR'$, so
\begin{equation}\label{eqn:decrease P}
    |P'| = |P' \setminus \cup \cR'| + \sum_{\cQ \in \cR'} |P'(\calQ)| \lesim |P \setminus \cup \cR'| + \sum_{\cQ \in \cR'} |P \cap \cQ| = |P|.
\end{equation}
We now check that $P'$ satisfies the conditions of Lemma \ref{lem:easier_dimcheck} with $\beta+1$ for $\beta$. Pick $\cQ \in \cD_w$ with $w \in (\delta, 1)$ dyadic; if $\cQ \in \cR$, then by definition, $\cQ \subset \cQ'$ for some maximal element $\cQ' \subset \cR'$. Then by estimate \ref{E2} applied to $P'(\cQ')$, we get $|P' \cap \cQ| \lesim K_\beta \left( \frac{w}{\delta} \right)^{\beta+1}$. If $\cQ \notin \cR$, then by definition of $P'$ and $\cR$, we already know $|P' \cap \cQ| = |P \cap \cQ| \le K_\beta \cdot \left( \frac{w}{\delta} \right)^{\beta+1}$. In either case, we get $|P' \cap \cQ| \lesim K_\beta \left( \frac{w}{\delta} \right)^{\beta+1}$.

By assumption, $\T$ is a $(\delta, \alpha, K_\alpha)$-set of tubes, and by Lemma \ref{lem:easier_dimcheck}, $P'$ is a $(\delta, \beta+1, CK_\beta)$-set of balls (here $C > 0$ is an absolute constant). Hence, we can apply Theorem \ref{bigbig}, \eqref{eqn:increase incidences 2}, and \eqref{eqn:decrease P} to get (with $c = \frac{1}{\max(\alpha + \beta, 2)}$):
\begin{equation*}
    \sum_{t \in \T} |P_t| \lesim I(P', \T) \lesim_\eps D^{c+\eps} K_\alpha^c K_\beta^c |P'|^{1-c} |\T|^{1-c} \lesim D^{c+\eps} K_\alpha^c K_\beta^c |P|^{1-c} |\T|^{1-c}.
\end{equation*}
This completes the proof of Theorem \ref{thm:general_furst'}.
\end{proof}

\subsection{Constructing the set $\cP_w$}\label{subsec:construct Pw}
Let $\delta < w \le \frac{1}{2}$ such that $\frac{w}{\delta}$ is an even integer. Recall that we want to construct a set $\cP_w$ contained in $[0, w]^2$ with the following properties (where the implicit constants are absolute):
\begin{enumerate}[(P1)]

    \item $|\cP_w \cap Q| := |\{ p \in \calP_w \mid p \subset Q \}| \lesim \left( \frac{d}{\delta} \right)^{\beta+1}$ for any $d \times d$ square $Q$ with sides parallel to the coordinate axes and $d \in [\delta, 1]$.

    \item Let $t$ be a $\delta$-tube that forms angle $\frac{\pi}{4} \pm \frac{\pi}{100}$ with the $y$-axis. Suppose $P_t$ is a $(\delta, \beta, K_\beta)$-set satisfying $p \cap t \neq \emptyset$ for all $p \in P_t$, and each $\delta$-ball in $P_t$ is centered in the lattice $(\delta (2\Z + 1))^2$. Then we have $K_\beta |\{ p \in \calP_w \mid p \cap t \neq \emptyset \}| \gesim |P_t \cap [0, w]^2|$.
\end{enumerate}

Let $\calC$ be a Cantor set with Hausdorff dimension $\beta$ that contains $0$ and $w$, and let $\calC_\delta := \calC^{(\delta)} \cap \delta \Z$ be a discretization of $\calC$ at scale $\delta$. (Recall that $\calC^{(\delta)}$ is the $\delta$-neighborhood of $\calC$.) Now let $\calP_w$ be the set of $\delta$-balls centered at $(m\delta, n\delta)$, for all $m, n \in \Z$ satisfying $1 \le m, n < \delta^{-1}$ and at least one of $m\delta$ or $n\delta$ belong to $\calC_\delta$.

We first verify \ref{E2}. Let $d \in [\delta, 1]$ and $Q$ be a $d \times d$ square with sides parallel to the coordinate axes, and $I$ be the projection of $Q$ onto the $x$-axis.
Since $\calC$ is a $\beta$-dimensional Cantor set and $I$ has length $d$, we have $|\calC_\delta \cap I| \lesim (\frac{d}{\beta})^\beta$. For any $m\delta \in \calC_\delta \cap I$, there are $\frac{d}{\delta}$ many values of $n$ such that $(m\delta, n\delta) \in Q$. Thus, there are $\lesim (\frac{d}{\delta})^{\beta+1}$ many values for $(m, n) \in \Z^2$ such that $(m\delta, n\delta) \in Q$ and $m\delta \in \calC_\delta$. A similar bound applies to those $(m, n) \in \Z^2$ satisfying $(m\delta, n\delta) \in Q$ and $n\delta \in \calC_\delta$, so we conclude that $|\cP_w \cap Q| \lesim 2 (\frac{d}{\delta})^{\beta+1}$. This proves \ref{E2}.

Now, we show \ref{E3}. We may assume that $t$ intersects $[0, w]^2$ (otherwise the right hand side of \ref{E3} is zero). Since $t$ is a $\delta \times 1$ rectangle and $[0, w]^2$ has diagonal length $\sqrt{2} w < 1$, we see that $t$ must intersect one of the sides of $[0, w]^2$. By rotating the configuration if necessary, we may assume without loss of generality that $t$ intersects the edge $L$ between $(0, 0)$ and $(0, w)$.

If $P_t \cap [0, w]^2 = \emptyset$ we are done; thus, assume there exists $q \in P_t$ with $q \subset [0, w]^2$. In particular, $q \cap t \neq \emptyset$. Let $d$ be the length of the projection of $t \cap [0, w]^2$ onto the $x$-axis. We claim $|\{ p \in \calP_w \mid p \cap t \neq \emptyset \}| \ge \max(1, (\frac{d}{\delta})^\beta)$. To prove the claim, we divide into cases.

\textbf{Case 1.} $d \ge \delta$. Then $t$ intersects some ball $(m\delta, n\delta) \in \calP_w$ for every $m\delta \in \calC_\delta \cap [0, d]$. Since $\calC$ is a $\beta$-dimensional Cantor set containing $0$, we have $|\{ m \in \Z \mid m\delta \in \calC_\delta \cap [0, d]\}| \gesim \left( \frac{d}{\delta} \right)^\beta$. Thus, $|\{ p \in \calP_w \mid p \cap t \neq \emptyset \}| \gesim \left( \frac{d}{\delta} \right)^\beta$.

\textbf{Case 2.} $d < \delta$. We know $q = (m\delta, n\delta)$ for some $1 \le m, n < \frac{w}{\delta}$, $m, n \in 2\Z + 1$. We show that $q$ must have $x$-coordinate $\delta$. Indeed, suppose $q$ has $x$-coordinate $\ge 3\delta$. Then choosing a point $B$ in $q \cap t$, we see that $B$ has $x$-coordinate at least $2\delta$. Since $t$ is convex and also intersects $L$, the projection of $t$ onto the $x$-axis contains $[0, 2\delta]$, so $d > 2\delta$, contradiction. Thus, $q = (\delta, n\delta)$, which means $q \in \calP_w$ (since $\delta \in \calC_\delta)$. Hence, $|\{ p \in \calP_w \mid p \cap t \neq \emptyset \}| \ge 1$.

Thus, we have showed $|\{ p \in \calP_w \mid p \cap t \neq \emptyset \}| \ge \max(1, \frac{d}{\delta})^\beta$.
On the other hand, since the $\delta$-tube $t$ has angle $\frac{\pi}{4} \pm \frac{\pi}{100}$ with the $y$-axis and $t \cap [0, w]^2$ projects onto a length $d$ interval on the $x$-axis, we see that $t \cap [0, w]^2$ is contained in a ball with radius $\sim (d + \delta)$. Thus, $P_t \cap [0, w]^2$ is contained in a ball with radius $\sim (d + 2\delta)$. Since $P_t$ is a $(\delta, \beta, K_\beta)$-set and $\beta \in [0, 2]$, we have
\begin{equation*}
    |P_t \cap [0, w]^2| \lesim K_\beta \left( \frac{d+\delta}{\delta} \right)^\beta
    \lesim K_\beta \max\left(1, \left( \frac{d}{\delta} \right)^\beta \right) \lesim K_\beta |\{ p \in \calP_w \mid p \cap t \neq \emptyset \}|.
\end{equation*}
This verifies \ref{E3}.


\subsection{The simplifying assumptions are harmless.}\label{subsec:reductions}
We will show how to use Theorem \ref{thm:general_furst'} to prove Theorem \ref{thm:general_furst}; it is largely an exercise in pigeonholing.
\begin{proof}[Proof of Theorem \ref{thm:general_furst}]
    First, partition the set of $\delta$-tubes $\T$ into $100$ groups $\T_1, \T_2, \cdots, \T_{100}$, where $\T_i$ consists of the tubes in $\T$ with angle in $[\frac{2\pi i}{100}, \frac{2\pi (i+1)}{100}]$ with the $y$-axis. By the pigeonhole principle, there exists $i$ with $\sum_{t \in \T_i} |P_t| \ge \frac{1}{100} \sum_{t \in \T} |P_t|$. Henceforth, we work only with the tubes in $\T_i$. By rotating the configuration appropriately, we may assume the tubes in $\T_i$ have angle in $[\frac{\pi}{4}, \frac{\pi}{4} + \frac{\pi}{100}]$ with the $y$-axis.

Let $w = 2^{-n}$ satisfy $4\delta \le w < 8\delta$. We first show that every $\delta$-ball in $P$ is contained in a $w$-ball centered at some point in $(w\Z)^2$. Indeed, let $p \in P$, and let $x$ be the point in $(w\Z)^2$ closest to $p$. Then $d(x, p) \le \frac{\sqrt{2}}{2} w < \frac{3}{4}w$ and $\delta + d(x, p) < \delta + \frac{3}{4} w < w$, so $p \subset B_w (x)$.

Now we replace the $\delta$-balls in $P = \bigcup_{t \in \T} P_t$ with $w$-balls centered in $(w\Z)^2$ containing the respective $\delta$-balls, forming $P' = \bigcup_{t \in \T} P_t'$. Likewise, we thicken the $\delta$-tubes in $\T_i$ to $w$-tubes, forming $\T_i'$. The resulting sets $P'_t$ and $\T'_i$ will be $(w, \beta, \lceil 64K_\beta \rceil)$ and $(w, \alpha, \lceil 64K_\alpha \rceil)$-sets respectively.

We would further like $P'$ to have centers in $((2w+1)\Z)^2$. To ensure this, for $a, b \in \{ 0, 1 \}$, let $P'_t (a,b)$ be the elements in $P$ centered at some $(mw, nw)$ with $m \equiv a, n \equiv b \pmod 2$. Then by the pigeonhole principle, there exist $a, b$ such that $\sum_{t \in \T'_i} |P'_t (a,b)| \ge \frac{1}{4} \sum_{t \in \T'_i} |P'_t|$. By translating the configuration appropriately, we may assume that $a = b = 1$.

Let us summarize our achievements: we have found a $(w, \beta, \lceil 64K_\beta \rceil)$-set of tubes $\T'_i$, each forming an angle in $[\frac{\pi}{4}, \frac{\pi}{4} + \frac{\pi}{100}]$ with the $y$-axis, and for each $t \in \T'_i$ we have a $(w, \alpha, \lceil 64K_\alpha \rceil)$ set $P'_t (1,1)$ centered in the lattice $(w (2\Z + 1))^2$, such that $p \cap t \neq \emptyset$ for each $p \in P'_t (1, 1)$ and
\begin{equation}\label{eqn:relation between reduction}
    \sum_{t \in \T} |P_t| \le 400 \sum_{t \in \T_i'} |P'_t (1,1)|.
\end{equation}
Now $\T'_i$ and $P'_t (1,1)$ satisfy the assumptions of Theorem \ref{thm:general_furst'} with parameters $w = 2^{-n}, \lceil 64K_\alpha \rceil, \lceil 64K_\beta \rceil$, so \eqref{eqn:general_furst'} holds for these parameters. Combine this with \eqref{eqn:relation between reduction} to obtain the desired bound \eqref{eqn:general_furst} (with a worse implicit constant).
\end{proof}

\subsection{Proof of Theorem \ref{thm:furstenberg} and Corollary \ref{cor:sum-product}}
We now define a $\delta$-discretized Furstenberg set. Let $D = \delta^{-1}$.
\begin{defn}\label{defn:furstenberg}
For $0 < v \le 2$ and $0 < u \le 1$, we call a collection $P$ of essentially distinct $\delta$-balls a $(\delta, u, v, K_u, K_v)$-Furstenberg set if there exists a $(\delta, v, K_v)$-set of tubes $\T$ with $|\T| \gtrsim K_v^{-1} \cdot D^v$ such that for each $t \in \T$, the set $P_t = \{ p \in P \mid p \cap t \neq \emptyset \}$ is a $(\delta, u, K_u)$-set of balls with $|P_t|\gtrsim K_u^{-1} \cdot D^u.$
\end{defn}
Then by Lemma 3.3 of \cite{hera-improved} with $K_u = K_v = C (\log (\delta^{-1}))^C < C_\epsilon \delta^{-\epsilon}$ for any $\epsilon > 0$, the bound in Theorem \ref{thm:furstenberg} follows from the corresponding discretized version.
\begin{theorem}\label{thm:furstenberg-discrete}
    For $1 \le v \le 2$ and $0 < u \le 1$, a $(\delta, u, v, K_u, K_v)$-Furstenberg set $P$ satisfies $|P| \ge c_\eps K_u^{-3} K_v^{-2} D^{\min(2u + v - 1, u + 1) - \eps},$ for every $\eps>0.$ (Here, $c_\eps > 0$ depends only on $\eps$.)
\end{theorem}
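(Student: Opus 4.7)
The plan is to derive Theorem \ref{thm:furstenberg-discrete} directly from the incidence estimate in Theorem \ref{thm:general_furst} by matching parameters. Apply that theorem with $\alpha = v$ and $\beta = u$ (so $K_\alpha = K_v$, $K_\beta = K_u$): the Furstenberg hypotheses in Definition \ref{defn:furstenberg} --- that $\T$ is a $(\delta, v, K_v)$-set of tubes and each $P_t$ is a $(\delta, u, K_u)$-set of balls --- are exactly what is required. With $c = 1/\max(u+v, 2)$, Theorem \ref{thm:general_furst} yields
\begin{equation*}
    \sum_{t \in \T} |P_t| \lesim_\eps D^{c+\eps} (K_u K_v)^c |P|^{1-c} |\T|^{1-c}.
\end{equation*}

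The next step is to turn this into a lower bound on $|P|$. From the pointwise hypothesis $|P_t| \gtrsim K_u^{-1} D^u$ the left-hand side is at least $K_u^{-1} |\T| D^u$; combining with the upper bound, isolating $|P|^{1-c}$, and then inserting the hypothesis $|\T| \gtrsim D^v$ gives
\begin{equation*}
    |P|^{1-c} \gtrsim_\eps K_u^{-1-c} K_v^{-c} D^{u - c - \eps} \, |\T|^c \gtrsim_\eps K_u^{-1-c} K_v^{-c} D^{u + vc - c - \eps}.
\end{equation*}
Taking $(1-c)$-th roots (absorbing the benign $1/(1-c)$ into $\eps$) produces
\begin{equation*}
    |P| \gtrsim_\eps K_u^{-(1+c)/(1-c)} K_v^{-c/(1-c)} D^{(u+vc-c)/(1-c) - \eps}.
\end{equation*}

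What remains is pure exponent bookkeeping, and this is the only place any care is needed. Since $c \in (0, 1/2]$, we have $(1+c)/(1-c) \le 3$ and $c/(1-c) \le 1$, so with $K_u, K_v \ge 1$ the $K$-factors are bounded below by the claimed $K_u^{-3} K_v^{-1}$. For the exponent on $D$, the crucial algebraic identity is
\begin{equation*}
    u(u+v) + v - 1 = (u+1)(u+v-1).
\end{equation*}
When $u + v \le 2$ one has $c = 1/2$ and $(u+vc-c)/(1-c) = 2u+v-1$; when $u+v \ge 2$ one has $c = 1/(u+v)$ and the identity collapses the numerator to give $u+1$. Either way the exponent equals $\min(2u+v-1,\, u+1)$. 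Because the whole argument is a one-shot application of Theorem \ref{thm:general_furst} followed by this algebraic simplification, no genuine obstacle is expected; the only non-mechanical ingredient is spotting the factoring identity above, which is what makes the two regimes interpolate cleanly.
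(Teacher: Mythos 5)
Your proposal is correct and follows essentially the same route as the paper: apply Theorem \ref{thm:general_furst} with $\alpha = v$, $\beta = u$, lower-bound the left side by $K_u^{-1}D^u|\T|$ and use $|\T|\gtrsim D^v$, then solve for $|P|$ and simplify the exponent using $c\le\frac12$. Your explicit case check and the identity $u(u+v)+v-1=(u+1)(u+v-1)$ just spell out the algebra the paper leaves implicit in the final display.
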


\begin{proof}
By Theorem \ref{thm:general_furst}, we have, for $c = \max(u+v, 2)^{-1}$,
\begin{equation*}
    K_u^{-1} D^u |\T| \le C_\eps D^{c+\eps(1-c)} K_u^c K_v^c |P|^{1-c} |\T|^{1-c}.
\end{equation*}
Using $|\T|\gtrsim K_v^{-1} \cdot D^v$ and $c = \frac{1}{\max(u+v, 2)} \le \frac{1}{2}$, we get
\begin{equation*}
    |P| \ge C_\eps^{-\frac{1}{1-c}} K_u^{-\frac{1+c}{1-c}} K_v^{-\frac{2c}{1-c}} D^{\frac{u+(v-1)c-\eps(1-c)}{1-c}} \ge C_\eps^{-2} K_u^{-3} K_v^{-2} D^{\min(2u+v-1, u+1) - \eps},
\end{equation*}
as desired.
\end{proof}

Finally, we prove Corollary \ref{cor:sum-product}.
\begin{corollary}
    Let $0 < \delta \le 1$, $u, v, v' \in [0, 1]$ with $v + v' > 1$, and $K_u, K_v, K_{v'} \ge 1$. Let $A, B, C \subset [1, 2]$ be sets of disjoint $\delta$-balls such that $A$ is a $(\delta, u, K_u)$-set, $B$ is a $(\delta, v, K_v)$ set, and $C$ is a $(\delta, v', K_{v'})$ set. For a set $E \subset \R$, let $|E|_\delta$ denote the minimum number of $\delta$-balls to cover $E$. Then for $c = \max(u + v + v', 2)^{-1}$,
    \begin{equation*}
        \max(|A+B|_\delta, |A \cdot C|_\delta) \gesim K_u^{-\frac{c}{2(1-c)}} K_v^{-\frac{c}{2(1-c)}} \delta^{\frac{c}{2(1-c)} + \eps} |B|^{\frac{c}{2(1-c)}} |C|^{\frac{c}{2(1-c)}} |A|^{\frac{1}{2(1-c)}}.
    \end{equation*}
\end{corollary}

\begin{proof}
The proof is a very slight modification of Section 6.3 of \cite{furstenberg}, using our Theorem \ref{thm:general_furst}. We provide full technical details below. Let $X$ be a minimal (disjoint) covering of $A + B$ by $\delta$-balls, and let $Y$ be a minimal covering of $AC$ by $\delta$-balls. Let $\tX$ denote the set of centers of the $\delta$-balls in $X$, and define $\tY, \tA, \tB, \tC$ analogously. Finally, for $x \in A + B$, let $\tX(x)$ be the center of the $\delta$-ball in $X$ containing $x$, and similarly for $\tY(x)$. Let
\begin{gather*}
    F = \{ (x, y)^{(\delta)} \mid x \in \tX, y \in \tY \} \text{ is a set of } \delta\text{-balls}, \\
    F_{bc} = \{ (\tX(a+b), \tY(ac))^{(\delta)} \mid a \in \tA \} \subset F, \\
    t_{bc} = \delta\text{-tube with midline } y = cx - bc, \text{ for } b \in \tB, c \in \tC, \\
    \T = \{ t_{bc} \mid b \in \tB, c \in \tC \} \text{ is a set of } \delta\text{-tubes}.
\end{gather*}
(Here, $X^{(\delta)}$ is the $\delta$-neighborhood of $X$.)

We make the following observations.
\begin{enumerate}
    \item $|F| = |\tX| |\tY| = |A+B|_\delta |A\cdot C|_\delta$.
    
    \item Since $B$ is a $(\delta, v, K_v)$-set and $C$ is a $(\delta, v', K_{v'})$ set, $\T$ must be a $(\delta, v+v', 100 K_v K_{v'})$-set of $\delta$-tubes. Furthermore, $|\T| = |B| |C|$.
    
    \item Since $(a+b, ac)$ lies on $\ell_{bc} : y = cx - bc$ and $d((a+b, ac), (\tX(a+b), \tY(ac))) \le \sqrt{2} \delta < \frac{3}{2} \delta$, we see that $t_{bc}$ (the $\frac{\delta}{2}$-neighborhood of $\ell_{bc}$) intersects every $\delta$-ball in $F_{bc}$.
    
    \item We want to show $F_{bc}$ is a $(\delta, u, 4K_u)$-set of balls. Consider a $w$-ball $B_w \subset \R^2$. For each $\delta$-ball $p \in F_{bc}$ that lies in $B_w$, we can find an element $(a+b, ac)$ with $a \in \tA$ such that $p = (\tX(a+b), \tY(ac))^{(\delta)}$. Let $L'$ be the projection of $B_w$ onto the $x$-axis, and $L = \{ x - b \mid x \in L' \}$; then $a \in L$, so $a^{(\delta)} \subset L^{(\delta)}$. We know $|L| \le 2w$, so $|L^{(\delta)}| \le w+2\delta \le 4w$. In other words, $L^{(\delta)}$ is a 1-dimensional ball with radius $\le 2w$. Thus, since $A$ is a $(\delta, u, K_u)$-set, we get
    \begin{equation*}
        \{ p \in F_{bc} \mid p \subset B_w \} \le \{ a^{(\delta)} \in A \mid a^{(\delta)} \subset L^{(\delta)} \} \le K_u \cdot \left( \frac{2w}{\delta} \right)^u.
    \end{equation*}
    This shows $F_{bc}$ is a $(\delta, u, 4K_u)$-set of balls.
\end{enumerate}

Thus, by Theorem \ref{thm:general_furst}, we have
\begin{equation*}
    |\T| |A| \le \sum_{t \in \T} |F_t| \lesim D^{c+\eps(1-c)} K_u^c K_v^c |\T|^{1-c} |F|^{1-c}.
\end{equation*}
Since $|\T| = |B| |C|$, we get $|A+B|_\delta |A\cdot C|_\delta = |F| \gesim K_u^{-\frac{c}{1-c}} K_v^{-\frac{c}{1-c}} D^{-\frac{c}{1-c} - \eps} |B|^{\frac{c}{1-c}} |C|^{\frac{c}{1-c}} |A|^{\frac{1}{1-c}}$, which implies the desired result.
\end{proof}




\section{Appendix}
In this Appendix, we will verify Lemma \ref{app2} for Construction 1, which is the case
\begin{equation}\label{case1}
    \alpha + \beta < 3, \alpha < \beta + 1, \text{ and } \beta < \alpha + 1.
\end{equation}
Recall $D = \delta^{-1}, a = \min(\alpha, 1), b = \min(\beta, 1), \gamma = \frac{a - \alpha + \beta}{a + b}, \kappa = \alpha - (1-\gamma)a = \frac{a \beta + b \alpha - ab}{a + b}$.

\begin{lemma}
(a) We have $0 \le \gamma \le 1$ and $0 \le \kappa \le 1$.

(b) Suppose $\lambda$ is a parameter satisfying the defining condition
\begin{itemize}
    \item $(1-\gamma)a(a+1-\alpha) + \max(\lambda,1-\lambda) \kappa \le a$;
    
    \item $(1-\gamma)b(b+1-\beta) + \max(\lambda,1-\lambda) \kappa \le b$;
    
    \item $\gamma + (1-\gamma)\min(a, b) \ge \max(\lambda, 1-\lambda)$;
    
    \item $0 \le \lambda \le \min(\gamma, 1-\gamma) \le 1$.
\end{itemize}
Then $\T$ is a $(\delta, \alpha)$-set of tubes and $P$ is a $(\delta, \beta)$-set of balls.

(c) $\lambda = \min(\gamma, 1-\gamma)$ satisfies the defining condition.
\end{lemma}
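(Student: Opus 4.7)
The plan is to treat parts (a), (b), (c) separately, with (b) being the main technical work and (a), (c) essentially bookkeeping.

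For part (a), both inequalities are direct consequences of the Case 1 hypotheses $\alpha + \beta < 3$, $\alpha < \beta + 1$, $\beta < \alpha + 1$, once I split into the four subcases obtained by comparing each of $\alpha$ and $\beta$ to $1$. For $\gamma = \frac{a-\alpha+\beta}{a+b}$: when $\alpha \le 1$ one has $a = \alpha$, so $\gamma = \beta/(a+b) \ge 0$ trivially; when $\alpha > 1$ one has $a = 1$ and $\gamma \ge 0$ is exactly $\alpha \le \beta + 1$. The inequality $\gamma \le 1$ is symmetric. For $\kappa = \frac{a\beta + b\alpha - ab}{a+b}$, the lower bound is immediate from $\alpha \ge a$ and $\beta \ge b$; the upper bound $\kappa \le 1$ reduces to $\alpha + \beta \le 3$ in the subcase $\alpha, \beta > 1$ (where $\kappa = (\alpha + \beta - 1)/2$) and to routine algebra in the remaining subcases.

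For part (b), I would verify the two spacing conditions separately. Fix a $w \times 2$ rectangle $R_w$ and bound the number of tubes of $\T$ inside it by a product: (number of bundles $R_w$ meets) times (tubes per bundle lying in $R_w$). Because the bundles occupy a $D^{(1-\lambda)\kappa} \times D^{\lambda\kappa}$ grid with horizontal spacing $\delta^{(1-\lambda)\kappa}$ and angular increment $\delta^{\lambda\kappa}$ per row, the count splits into scale regimes for $w$ relative to the bundle width $\delta^\gamma$ and the inter-bundle spacings, and in each regime the total is bounded by $(w/\delta)^\alpha$ using one of the four defining inequalities on $\lambda$. Concretely, the first inequality controls the regime where $R_w$ captures tubes of varying direction inside a single bundle, the third handles the regime where $R_w$ captures several rotated bundles, and the range restriction $\lambda \le \min(\gamma, 1-\gamma)$ ensures the transition scales are ordered correctly. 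The $(\delta, \beta)$-set condition on $P$ is the dual check: for a $w$-ball $B_w$ one multiplies the number of bundles $B_w$ meets (governed by the vertical spacing $\delta^{\lambda\kappa}$) by the number of balls of one bundle inside $B_w$ (governed by the intra-bundle ball spacing $\delta^{1-\gamma+\gamma b}$), with the second inequality playing the role of the first.

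For part (c), with the choice $\lambda = \min(\gamma, 1-\gamma)$ we have $\max(\lambda, 1-\lambda) = \max(\gamma, 1-\gamma)$, so each of the four defining conditions becomes a closed-form inequality in $a, b, \alpha, \beta$ after substituting the explicit values of $\gamma$ and $\kappa$. The fourth condition is immediate from part (a). The third simplifies in both cases $\gamma \le 1/2$ and $\gamma \ge 1/2$ to an inequality that holds because $\min(a, b)$ and $\gamma$ satisfy the relevant Case 1 bounds. Conditions (1) and (2) need the most care: I would verify them by the same four-way case split on $(\alpha, \beta)$ versus $1$, using the definition of $\kappa$ together with, in the large-$\alpha$ or large-$\beta$ subcases, the hypothesis $\alpha + \beta < 3$ to absorb the extra factors $a + 1 - \alpha$ and $b + 1 - \beta$. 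The main obstacle across the whole argument is the proliferation of subcases; organizing the four-way split cleanly at the start and reusing it for every inequality is the only way to keep the proof readable, which is exactly why the authors relegate it to an appendix.
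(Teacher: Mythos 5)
Your part (a) and your outline of (b) follow the paper's route (the same four-way comparison of $\alpha,\beta$ with $1$, and the same rows $\times$ columns $\times$ tubes-per-bundle count split into scale regimes), but part (c) as written has a genuine gap, and it is precisely where the paper does real work. You assert that the third defining condition ``simplifies in both cases $\gamma\le 1/2$ and $\gamma\ge 1/2$ to an inequality that holds because $\min(a,b)$ and $\gamma$ satisfy the relevant Case 1 bounds.'' Taken at face value, with right-hand side $\max(\lambda,1-\lambda)$, this is false: for $\alpha=1.05$, $\beta=0.1$ (allowed in Construction 1) one has $a=1$, $b=0.1$, $\gamma=\tfrac{0.05}{1.1}\approx 0.045$, so $\gamma+(1-\gamma)\min(a,b)\approx 0.14$ while $\max(\lambda,1-\lambda)=1-\gamma\approx 0.95$. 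What the construction actually needs, and what the paper verifies (its two displayed inequalities $\gamma+(1-\gamma)\min(a,b)\ge\gamma\kappa$ and $\ge(1-\gamma)\kappa$), is the $\kappa$-weighted bound $\gamma+(1-\gamma)\min(a,b)\ge\max(\lambda,1-\lambda)\,\kappa$; the printed bullet omits the factor $\kappa$, and a correct proof has to work with the weighted version. Verifying it in the case $\gamma<1/2$ hinges on $\kappa\le\min(a,b)$, which is strictly stronger than the bound $\kappa\le 1$ you prove in (a); this is the paper's Fact 2, read off from the table of explicit values of $\gamma,\kappa$ in the four subcases, and it is also what makes the paper's two-line verification of conditions (1) and (2) work, together with the identity $(a-1)(a-\alpha)=0$, i.e. $a(a-\alpha)=a-\alpha$, which lets them bypass your four-way case split for (1) and (2) entirely. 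Your plan of brute-force substitution for (1) and (2) would likely succeed, but without $\kappa\le\min(a,b)$ (or the equivalent computation) your treatment of the third condition does not close.

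Two smaller points on (b). You never verify essential distinctness of the tubes (required by the definition of a $(\delta,\alpha)$-set) nor distinctness of the balls; the paper checks both explicitly using the spacings $\delta^{\lambda\kappa},\delta^{(1-\lambda)\kappa},\delta^{\gamma+(1-\gamma)a}\ge\delta$ and $\lambda\kappa\le\lambda\le 1-\gamma$. Also your attribution of roles is off: in the paper the first (and, dually, second) defining inequality is used to beat the cross terms (tubes-per-bundle times number of bundle rows or columns) after the interpolation $\min(x,y)\le x^{\alpha-a}y^{1+a-\alpha}$, while the third (weighted) inequality is what disposes of the small-$w$ regime $w<\delta^{\gamma+(1-\gamma)a}$, where all three factors are $O(1)$. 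As a sketch this is repairable, but as written it does not pin down how each term is actually bounded by $(w/\delta)^\alpha$.
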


\begin{proof}
We will show four facts:
\begin{enumerate}
    \item $0 \le \gamma \le 1$.
    
    \item $0 \le \kappa \le \min(a, b)$. 
    
    \item The $\delta$-tubes are a $(\delta, \alpha)$-set of tubes.
    
    \item The $\delta$-balls are a $(\delta, \beta)$-set of balls.
\end{enumerate}

These facts allow us to verify $\lambda = \min(\gamma, 1-\gamma)$ satisfies the defining condition.
We perform the following computation using Facts 1-2 and $(a-1)(a-\alpha) = 0$:
\begin{align*}
    (1-\gamma)a(a+1-\alpha) + \gamma\kappa &\le a(1-\gamma) + \gamma a = a,
\end{align*}
\begin{align*}
    (1-\gamma)a(a+1-\alpha) + (1-\gamma)\kappa &= (1-\gamma)(a(a+1-\alpha)+(\alpha - (1 - \gamma) a)) \\
                &= (1-\gamma)(a(a-\alpha) + \alpha + \gamma a) \\
                &= (1-\gamma)(a-\alpha + \alpha + \gamma a) \\
                &= (1-\gamma)(1+\gamma)a \le a.
\end{align*}
\begin{equation*}
    \gamma + (1-\gamma)\min(a,b) \ge \gamma \ge \gamma \kappa,
\end{equation*}
\begin{equation*}
    \gamma + (1-\gamma)\min(a,b) \ge (1-\gamma)\min(a,b) \ge (1-\gamma)\kappa,
\end{equation*}

\begin{figure}[h]
    \centering
    \begin{tabular}{c|c|c|c}
        $\alpha$ & $\beta$ & $\gamma$ & $\kappa$ \\\hline\hline
        $\le1$ & $\le1$ & $\dfrac{\beta}{\alpha + \beta}$ & $\dfrac{\alpha \beta}{\alpha + \beta}$ \\
        $\le1$ & $\ge1$ & $\dfrac{\beta}{\alpha+1}$ & $\dfrac{\alpha \beta}{\alpha + 1}$ \\
        $\ge1$ & $\le1$ & $\dfrac{1-\alpha+\beta}{1+\beta}$ & $\dfrac{\alpha\beta}{1+\beta}$ \\
        $\ge1$ & $\ge1$ & $\dfrac{1-\alpha+\beta}{2}$ & $\dfrac{\alpha+\beta-1}{2}$
    \end{tabular}
    \caption{Computing $\gamma, \kappa$ in terms of $\alpha, \beta$.}
    \label{table}
\end{figure}

Now, we turn to proving the facts. From Figure \ref{table} and the conditions in equation (\ref{case1}), we can easily show Facts 1 and 2. Now, we will verify the $\delta$-tubes are a $(\delta, \alpha)$-set of tubes. Fix $w \in [\delta, 1]$ and a $w \times 2$ rectangle $R_w$; we will count how many $\delta$-tubes are in $R_w$. Recall that the bundles in the construction are arranged in a rectangular grid, with bundles in the same row being translates of each other, and bundles in the same column being rotates of each other. We will estimate the number of bundles per row and column that $R_w$ intersects (where $R_w$ intersects a bundle if it contains a tube from that bundle), as well as the number of tubes $R_w$ can contain from each bundle.

\begin{itemize}
    \item $R_w$ can intersect bundles of $\lesim \lceil \frac{w}{\delta^{\lambda\kappa}} \rceil$ different rows. This is because the angle between bundles of adjacent rows is $\delta^{\lambda \kappa}$, which is at least the angle $\delta^\gamma$ of a single bundle (since $\gamma \ge \lambda \ge \lambda \kappa$). 
    
    \item $R_w$ can intersect bundles of $\lesim \lceil \frac{w}{\delta^{(1-\lambda) \kappa}} \rceil$ different columns. This is because the horizontal spacing between two adjacent columns is $\delta^{(1-\lambda)\kappa}$.
    
    \item For each bundle, $R_w$ can contain $\lesim \min(\lceil \frac{w}{\delta^{\gamma + (1-\gamma)a}} \rceil, D^{(1-\gamma)a})$ $\delta$-tubes. This is because the angle separation between adjacent $\delta$-tubes of the same bundle is $\delta^{\gamma + (1-\gamma)a}$. 
\end{itemize}

Thus, $R_w$ contains exactly $N$ many $\delta$-tubes from $\T$, for
\begin{equation*}
    N \lesim \min\left(\lceil \frac{w}{\delta^{\gamma + (1-\gamma)a}} \rceil, D^{(1-\gamma)a} \right) \cdot \left( \frac{w}{\delta^{(1-\lambda) \kappa}} + 1 \right) \left( \frac{w}{\delta^{\lambda\kappa}} + 1 \right).
\end{equation*}

Suppose $w < \delta^{\gamma + (1-\gamma)a}$. From property of $\lambda$, we get $w < \delta^{\lambda \kappa}$ and $w < \delta^{(1-\lambda)\kappa}$. Hence,
\begin{equation*}
    N \lesim 2 \cdot 2 \cdot 2 = 8 < 8 \left( \frac{w}{\delta} \right)^\alpha.
\end{equation*}
Thus we may assume $w > \delta^{\gamma + (1-\gamma)a}$. In this case, we can use $\lceil x \rceil \le x + 1 \le 2x$ to write
\begin{equation*}
    n \lesim \min\left(\frac{w}{\delta^{\gamma + (1-\gamma)a}}, D^{(1-\gamma)a} \right) \cdot \left( \frac{w}{\delta^{(1-\lambda)\kappa}} + 1 \right) \left( \frac{w}{\delta^{\lambda\kappa}} + 1 \right).
\end{equation*}

Let $m = \min\left(\frac{w}{\delta^{\gamma + (1-\gamma)a}}, D^{(1-\gamma)a} \right)$. We expand the product and bound each term separately. We will use the fact $\min(x, y) \le x^c y^{1-c}$ for any $0 \le c \le 1$, as well as $a \le \min(\alpha, 1)$, $\delta \le w \le 1$, $\kappa = \alpha - (1-\gamma)a$, and the defining relation of $\lambda$.
\begin{equation}\label{just1}
    m \le \left( \frac{w}{\delta^{\gamma + (1-\gamma)a}} \right)^a \left( D^{(1-\gamma)a} \right)^{1-a} = \left( \frac{w}{\delta} \right)^a \le \left( \frac{w}{\delta} \right)^\alpha.
\end{equation}
\begin{equation}\label{just4}
    m \cdot \frac{w}{\delta^{(1-\lambda)\kappa}} \le \left( \frac{w}{\delta} \right)^{\alpha-a} \left( D^{(1-\gamma)a} \right)^{a+1-\alpha} \cdot \frac{w}{\delta^{(1-\lambda)\kappa}} \le \frac{w^{\alpha-a+1}}{\delta^\alpha} \le \left( \frac{w}{\delta} \right)^\alpha,
\end{equation}
\begin{equation}\label{just5}
    m \cdot \frac{w}{\delta^{\lambda\kappa}} \le \left( \frac{w}{\delta} \right)^{\alpha-a} \left( D^{(1-\gamma)a} \right)^{a+1-\alpha} \cdot \frac{w}{\delta^{\lambda\kappa}} \le \frac{w^{\alpha-a+1}}{\delta^\alpha} \le \left( \frac{w}{\delta} \right)^\alpha,
\end{equation}
\begin{equation}\label{just6}
    m \cdot \frac{w}{\delta^{\lambda\kappa}} \cdot \frac{w}{\delta^{(1-\lambda)\kappa}} \le D^{(1-\gamma)a} \cdot \frac{w}{\delta^{\lambda\kappa}} \cdot \frac{w}{\delta^{(1-\lambda)\kappa}} = \frac{w^2}{\delta^\alpha} \le \left( \frac{w}{\delta} \right)^\alpha.
\end{equation}
Hence, we get $|\{ t \in \T : t \subset R_w \}| = N \lesim \left( \frac{w}{\delta} \right)^\alpha$ for all $w \in [\delta, 1]$ and $w \times 2$ rectangles $R_w$. This means $\T$ is a $(\delta, \alpha)$-set of tubes, proving Fact 3. 
    
    

Now, we verify the $\delta$-balls are a $(\delta, \beta)$-set of balls. Fix $w \in [\delta, 1]$ and a ball $B_w$ of radius $w$; we will count how many $\delta$-balls from $P$ are in $B_w$. As before, we will count the number of bundles per row and column that $B_w$ intersects, as well as the number of $\delta$-balls $B_w$ can contain from each bundle.

\begin{itemize}
    \item $B_w$ can intersect bundles of $\lesim \lceil \frac{w}{\delta^{\lambda\kappa}} \rceil$ different rows. This is because the vertical spacing between two adjacent rows is $\delta^{\lambda\kappa}$, which is at least the height $\delta^{1-\gamma}$ of a single bundle (since $1 - \gamma \ge \lambda \ge \lambda \kappa$).
    
    \item $B_w$ can intersect bundles of $\lesim \lceil \frac{w}{\delta^{(1-\lambda)\kappa}} \rceil$ different columns. This is because the horizontal spacing between two adjacent columnns is $\delta^{(1-\lambda)\kappa}$, which is at least the width $\delta^\gamma$ of a single bundle (since $\gamma \ge 1-\lambda \ge (1-\lambda) \kappa$).
    
    \item For each bundle, $B_w$ can contain $\lesim \min(\lceil \frac{w}{\delta^{1-\gamma + \gamma b}} \rceil, D^{\gamma b})$ $\delta$-balls.
\end{itemize}

Thus, $B_w$ contains at most $N$ $\delta$-balls, for
\begin{equation*}
    N \lesim \min\left(\lceil \frac{w}{\delta^{1-\gamma + \gamma b}} \rceil, D^{\gamma b} \right) \cdot \left( \frac{w}{\delta^{(1-\lambda)\kappa}} + 1 \right) \left( \frac{w}{\delta^{\lambda\kappa}} + 1 \right).
\end{equation*}
Using a similar method to the $\delta$-tubes case, we get the desired bound $|\{ p \in P : p \subset B_w \}| = N \lesim \left( \frac{w}{\delta} \right)^\beta$. Thus, $P$ is a $(\delta, \beta)$-set of balls, proving Fact 4. The Lemma is proved. 
    
    
\end{proof}

\bibliographystyle{alpha}
\bibliography{arxiv_version}

\end{document}